\newtheorem{theorem}{Theorem}[section]
\newtheorem{lemma}[theorem]{Lemma}
\newtheorem{coro}[theorem]{Corollary}
\newtheorem{proposition}[theorem]{Proposition}
\newcounter{other}            
\newtheorem{otherth}[other]{Theorem}              
\newtheorem{otherp}[other]{ Proposition}
\newcommand{\Cn}{\mathbb{C}^n}
\newcommand{\Sn}{\mathbb{S}_ n}
\newcommand{\Bn}{\mathbb{B}_ n}
\newcommand{\D}{\mathbb D}
\newcommand{\T}{\mathbb T}
\numberwithin{equation}{section}
\begin{document}

\title{Tent Carleson measures for  Hardy spaces}

\author[Xiaofen Lv]{Xiaofen Lv}
\address{Xiaofen Lv \\Department of Mathematics\\ Huzhou University, Huzhou
313000,   China } \email{lvxf@zjhu.edu.cn}

\author[Jordi Pau]{Jordi Pau}
\address{Jordi Pau \\Departament de Matem\`{a}tica Aplicada i Analisi\\
Universitat de Barcelona\\ Gran Via 585 \\
08007 Barcelona\\
Spain} \email{jordi.pau@ub.edu}



%


\subjclass[2010]{32A35, 32A36, 47B38}

\keywords{Hardy spaces, Carleson measures, tent spaces, area operators}

\thanks{The first author was partially
supported by  {NSFC (11601149,  11771139) and ZJNSF (LY20A010008)}.
The second author is
 supported by the grants MTM2017-83499-P (Ministerio de Educaci\'{o}n y Ciencia)  and  2017SGR358 (Generalitat de Catalunya). }


\begin{abstract}
We completely characterize those positive Borel measures $\mu$ on the unit ball $\Bn$ such that the Carleson embedding from Hardy spaces $H^p$ into the tent-type spaces $T^q_ s(\mu)$ is bounded, for all possible values of $0<p,q,s<\infty$.
\end{abstract}

\maketitle
\section{Introduction}

The concept of a Carleson measure was introduced by L. Carleson \cite{Carleson-0, carleson}
when studying interpolating sequences for bounded analytic
functions on the unit disk, in route of solving the famous corona problem. Carleson's originally result is a characterization of those positive Borel measures on the unit disk $\D$ for which the embedding $I:H^p\rightarrow L^p(\D,\mu)$ is bounded, for $0<p<\infty$.  Later on,
variations and extensions of the results to the unit ball $\Bn$ of $\Cn$ were obtained by H\"{o}rmander, Duren and Luecking \cite{Du,H,Lue1}, obtaining a full description of the boundedness of the embedding $I:H^p\rightarrow L^q(\Bn,\mu)$ for all possible choices of $0<p,q<\infty$. Moreover, generalizations of the Carleson embedding were subsequently studied by replacing the Hardy space $H^p$ by other function spaces, making Carleson type measures important tools
for the study of modern function and operator theory. We recall that, for $0<p<\infty$, the Hardy space $H^p:=H^p(\Bn)$ consists of those holomorphic functions $f$ in $\Bn$ with
 \[ \|f\|_{H^p}^p=\sup_{0<r<1}\int_{\Sn} \!\! |f(r\zeta)|^p \,d\sigma(\zeta)<\infty,\]
where $d\sigma$ is the surface measure on the unit sphere $\Sn:=\partial \Bn$ normalized so that $\sigma(\Sn)=1$. We refer to the books \cite{Alek}, \cite{Rud} and \cite{ZhuBn} for the theory of Hardy spaces in the unit ball.

Apart from replacing the Hardy space by another function space in the study of the embedding, the next more natural step for generalizing such study, is to replace $L^q(\Bn,\mu)$ by the tent space $T^q_ s (\mu)$, that we are going to define in a moment. We mention here that $L^q(\Bn,\mu)=T^q_ q(\mu)$. As far as we know, tent spaces were introduced by  Coifman, Meyer and Stein \cite{CMS} in order to study several problems in harmonic analysis. These spaces turned to be quite useful in developing further the classical theory of Hardy spaces, closely related to tent spaces due to the Calderon's area theorem. This motivates the study of the boundedness of the embedding $I:H^p\rightarrow T^q_ s(\mu)$ for all possible choices of $0<p,q,s<\infty$. Recently, natural analogues of tent spaces for Bergman spaces have been introduced \cite{PR2,PRS}, playing a similar role for the theory of weighted Bergman spaces as the original ones in the Hardy space. Other interesting results on Carleson measures related to Hardy spaces can be found in \cite{Chang, RFef, L-RP, SaSa}.

In order to define the tent spaces $T^q_ s(\mu)$, we need to introduce first the Kor\'anyi admissible approach region $\Gamma(\zeta)$, defined for $\zeta \in \Sn$ and $\gamma>1$ as
\[
\Gamma(\zeta):=\Gamma_\gamma(\zeta)=\left\lbrace z \in \Bn : |1-\langle z,\zeta\rangle| <\frac{\gamma}{2}(1-|z|^2)\right\rbrace.
\]
Let $0<q,s<\infty$ and $\mu$ be a positive Borel measure on $\Bn$. The tent space $T^{q}_{s}(\mu)$ consists of those $\mu$-measurable functions $f:\Bn \rightarrow \mathbb{C}$ with
$$\|f\|_{T^{q}_{s}(\mu)}^q : =\int_{\Sn} \left(\int_{\Gamma(\zeta)} |f(z)|^s \frac{d\mu(z)}{(1-|z|^2)^n}\right)^{q/s}d\sigma(\zeta)<\infty.$$
The aperture $\gamma>1$ of the Kor\'anyi region is suppressed from the notation, as it is well-known that any two apertures generate the same tent space with equivalent quasinorms.

A full description of the boundedness of the embedding from Hardy spaces $H^p$ into the tent spaces $T^q_ s(\mu)$ in one dimension in the case $p\le q$ was obtained by Cohn \cite{Cohn} and Gong-Lou-Wu \cite{GLW} in terms of the area-type operator $A_{\mu,s}$ defined as
\[
A_{\mu,s} f(\zeta)= \left (\int_{\Gamma(\zeta)} |f(z)|^s \,\frac{d\mu(z)}{(1-|z|^2)^n} \right )^{1/s}.
\]
It is obvious that $I:H^p\rightarrow T^q_ s(\mu)$ is bounded if and only if $A_{\mu,s}: H^p \rightarrow L^q(\Sn)$ is bounded. Related results can be found in \cite{Am, C-O, PP1, WuIEOT, Wu06}.

 Our first main result is a generalization of the description of the boundedness of the area operator $A_{\mu,s}: H^p \rightarrow L^q(\Sn)$ in the case $p\le q$ to the setting of higher dimensions. We mention here, that the proof given in one dimension in \cite{Cohn, GLW} was somewhat technical, using tools of harmonic analysis such as Calderon-Zygmund decompositions among others. The proof we present here, valid for all dimensions, is much more simpler and natural, and it is based on a trick that has its roots on \cite{P1} when studying integration-type operators acting on Hardy spaces.
\begin{theorem}\label{MT1}
Let $0<p\le q <\infty$, and $\mu$ be a positive Borel measure on $\Bn$. Then $A_{\mu,s}:H^p \rightarrow L^q(\Sn)$ is bounded, if and only if, $\mu$ is a $\beta$-Carleson measure, with $\beta=1+s(\frac{1}{p}-\frac{1}{q})$. Moreover, we have
\[
\big \| A_{\mu,s} \big \|_{H^p\rightarrow L^q(\Sn)}\asymp \|\mu\|^{1/s}_{CM_{\beta}}.
\]
\end{theorem}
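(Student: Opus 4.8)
The plan is to prove the two estimates $\|A_{\mu,s}\|_{H^p\to L^q(\Sn)}\lesssim\|\mu\|_{CM_\beta}^{1/s}$ (sufficiency) and $\|\mu\|_{CM_\beta}^{1/s}\lesssim\|A_{\mu,s}\|_{H^p\to L^q(\Sn)}$ (necessity) separately, where $\beta=1+s(\frac1p-\frac1q)\ge 1$. The tools are standard: the pointwise bound $|f(z)|\lesssim\|f\|_{H^p}(1-|z|^2)^{-n/p}$ for $f\in H^p$; the fact that the nontangential maximal function $\mathcal N f(\zeta)=\sup_{z\in\Gamma_{\gamma'}(\zeta)}|f(z)|$ over a slightly wider aperture $\gamma'>\gamma$ satisfies $\|\mathcal N f\|_{L^p(\Sn)}\asymp\|f\|_{H^p}$; and the Duren--Luecking embedding in the form: if $\mu$ is $\beta$-Carleson with $\beta\ge1$, then $\int_{\Bn}|g|^t\,d\mu\lesssim\|\mu\|_{CM_\beta}\|g\|_{H^{t/\beta}}^t$ for every $t>0$. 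As a warm-up the diagonal case $s=q$ (so $\beta=q/p$) is immediate: by Fubini $\int_{\Sn}(A_{\mu,q}f)^q\,d\sigma\asymp\int_{\Bn}|f|^q\,d\mu$, and one applies the embedding with $t=q$.

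The heart of the matter is sufficiency when $p<q$ (so $\beta>1$); this is where the trick with roots in \cite{P1} enters. I would aim for the pointwise estimate
\[
A_{\mu,s}f(\zeta)\;\lesssim\;\|\mu\|_{CM_\beta}^{1/s}\,\bigl(\mathcal N f(\zeta)\bigr)^{p/q}\,\|f\|_{H^p}^{\,1-p/q},\qquad \zeta\in\Sn,
\]
whence $\int_{\Sn}(A_{\mu,s}f)^q\,d\sigma\lesssim\|\mu\|_{CM_\beta}^{q/s}\|f\|_{H^p}^{q-p}\int_{\Sn}(\mathcal N f)^p\,d\sigma\asymp\|\mu\|_{CM_\beta}^{q/s}\|f\|_{H^p}^q$. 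To prove the pointwise estimate I slice the admissible cone dyadically, $\Gamma(\zeta)=\bigcup_{k\ge0}R_k(\zeta)$ with $R_k(\zeta)=\{z\in\Gamma(\zeta):1-|z|^2\asymp 2^{-k}\}$; since $R_k(\zeta)$ lies in a Carleson box of size $\asymp 2^{-k}$, the hypothesis gives $\mu(R_k(\zeta))\lesssim\|\mu\|_{CM_\beta}2^{-kn\beta}$. On $R_k(\zeta)$ I bound $|f|$ in two competing ways: using $|f(z)|\le\mathcal N f(\zeta)$ yields a contribution $\asymp\|\mu\|_{CM_\beta}(\mathcal N f(\zeta))^s\,2^{-kn(\beta-1)}$, which decays in $k$; using $|f(z)|\lesssim\|f\|_{H^p}2^{kn/p}$ yields a contribution $\asymp\|\mu\|_{CM_\beta}\|f\|_{H^p}^s\,2^{kns/q}$, which grows in $k$. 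Summing the first bound for large $k$ and the second for small $k$, and splitting at the crossover level $2^{k_0}\asymp(\mathcal N f(\zeta)/\|f\|_{H^p})^{p/n}$, the two geometric tails add up to exactly $\|\mu\|_{CM_\beta}(\mathcal N f(\zeta))^{sp/q}\|f\|_{H^p}^{s(q-p)/q}$, i.e. the desired bound to the power $s$. The series on the ``near the boundary'' side converges precisely because $\beta>1$.

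The borderline $p=q$ (equivalently $\beta=1$), where this interpolation degenerates, I would dispatch by more classical means, reducing everything to the Carleson embedding $\int_{\Bn}|f|^p\,d\mu\lesssim\|\mu\|_{CM_1}\|f\|_{H^p}^p$: when $s\ge p$ one uses $\bigl(\sum_k b_k\bigr)^{p/s}\le\sum_k b_k^{p/s}$ on the dyadic pieces of $\Gamma(\zeta)$ together with Hölder in $d\mu$ and Fubini; when $s<p$ one dualizes the $L^{p/s}(\Sn)$ norm, writes the pairing as $\int_{\Bn}|f|^s\widehat\psi\,d\mu$ for a balayage $\widehat\psi$ of the dual function, and combines Hölder in $d\mu$ with the elementary inequality $\int_{\Bn}\widehat\psi^{\,r}\,d\mu\lesssim\|\mu\|_{CM_1}\|\psi\|_{L^r(\Sn)}^r$. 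For necessity I would test against the normalized kernels $f_a(z)=(1-|a|^2)^b(1-\langle z,a\rangle)^{-(b+n/p)}$, $b>0$, which have $\|f_a\|_{H^p}\asymp1$, are comparable to $(1-|a|^2)^{-n/p}$ on a Bergman ball $D(a,1/2)$, and whose cones over a subset of $\Sn$ of $\sigma$-measure $\asymp(1-|a|^2)^n$ all contain $D(a,1/2)$. Feeding $f_a$ into $\|A_{\mu,s}f_a\|_{L^q}\lesssim\|A_{\mu,s}\|$ yields $\mu(D(a,1/2))\lesssim\|A_{\mu,s}\|^s(1-|a|^2)^{n\beta}$; for $\beta>1$ one upgrades this to $\mu(Q(\xi,r))\lesssim r^{n\beta}$ by covering $Q(\xi,r)$ by $\asymp 2^{kn}$ Bergman balls at each height $2^{-k}\le r$ and summing (again the geometric series converges only because $\beta>1$).

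The step I expect to be the main obstacle is exactly the borderline $\beta=1$, i.e. $p=q$. There the clean pointwise interpolation of the sufficiency proof collapses, and on the necessity side a single test function $f_a$ only controls $\mu$ on Bergman balls, which for $\beta=1$ is genuinely weaker than the Carleson box condition; one must recover the box estimate by a less economical device — testing against a random-sign sum $\sum_j c_j f_{a_j}$ over a separated sequence filling $Q(\xi,r)$ and using Khinchine's inequality (or duality when $p>1$) — thereby falling back onto the classical Carleson-measure theory for $H^p$. Tracking constants through both directions then gives $\|A_{\mu,s}\|_{H^p\to L^q(\Sn)}\asymp\|\mu\|_{CM_\beta}^{1/s}$.
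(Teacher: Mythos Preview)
Your proposal is correct but follows a genuinely different route from the paper, and the comparison is instructive.

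\textbf{Sufficiency.} You split according to $p<q$ versus $p=q$; the paper splits according to $q\le s$ versus $q>s$. For $q\le s$ the paper writes $|f(z)|^s\le f^*(\zeta)^{\alpha}|f(z)|^{s-\alpha}$ with $\alpha=p(s-q)/q$, applies H\"older with exponent $s/q$, and reduces via Fubini to the Carleson--H\"ormander--Duren embedding $H^p\hookrightarrow L^{s-\alpha}(\mu)$, since $(s-\alpha)/p=\beta$. For $q>s$ the paper dualizes the $L^{q/s}(\Sn)$ norm against $\varphi\ge 0$, obtains $\int_{\Bn}|f|^s\,P\varphi\,d\mu$ with $P\varphi$ the invariant Poisson integral, and applies H\"older plus the Carleson--Duren embedding to \emph{both} $|f|^{st}$ and $(P\varphi)^{t'}$ (the exponents are chosen so that both embeddings are governed by the same $\beta$). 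This handles all $p\le q$ uniformly; your case $p=q$ is absorbed without extra work. Your dyadic-shell crossover argument is a perfectly valid alternative for $p<q$ and even yields the stronger \emph{pointwise} estimate $A_{\mu,s}f(\zeta)\lesssim \|\mu\|_{CM_\beta}^{1/s}(\mathcal N f(\zeta))^{p/q}\|f\|_{H^p}^{1-p/q}$, at the cost of a separate treatment of the diagonal.

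\textbf{Necessity.} Here the paper's trick is quite different from yours and is worth noting, because it eliminates the $\beta=1$ obstacle you correctly flagged. The paper uses the integral characterization $\|\mu\|_{CM_\beta}\asymp\sup_{a}(1-|a|^2)^t\!\int_{\Bn}|1-\langle z,a\rangle|^{-(n\beta+t)}d\mu(z)$, rewrites the right-hand side via \eqref{EqG} as $\sup_a\int_{\Sn}(A_{\mu,s}f_a)^s\,d\sigma$ for the test functions $f_a$, and then splits $(A_{\mu,s}f_a)^s=(A_{\mu,s}f_a)^{\varepsilon s}(A_{\mu,s}f_a)^{(1-\varepsilon)s}$ by H\"older. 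The first factor is controlled by the assumed boundedness $A_{\mu,s}:H^p\to L^q$; the second is controlled by the \emph{already-proved sufficiency} applied with auxiliary exponents $p_1,q_1$ chosen so that $1+s(1/p_1-1/q_1)=\beta$. A computation shows $\|f_a\|_{H^p}^{\varepsilon s}\|f_a\|_{H^{p_1}}^{(1-\varepsilon)s}\asymp 1$, and one concludes (after an a~priori reduction to $\mu$ already $\beta$-Carleson). This bootstrap works for every $p\le q$, including $p=q$, with no covering argument and no Khinchine. Your Bergman-ball-plus-covering route is fine for $\beta>1$; at $\beta=1$ the Khinchine device you propose would work but is heavier than necessary.

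In short: both approaches succeed, but the paper's organization (split by $s$ versus $q$; bootstrap necessity from sufficiency) buys uniformity across the whole range $p\le q$ and avoids the endpoint gymnastics that your decomposition forces at $p=q$.
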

We recall that, for $\alpha>0$, a finite positive Borel measure on $\Bn$ is called an $\alpha$-Carleson measure if
$
\mu(B_{\delta}(\xi))\lesssim \delta^{n\alpha}
$ for all $\xi\in \Sn$ and $\delta>0$, where $B_{\delta}(\xi)$ denotes the non-isotropic metric ball
$$B_{\delta}(\xi)=\left\lbrace z\in \Bn: |1-\langle z, \xi \rangle |<\delta \right\rbrace.$$
The famous Carleson measure embedding theorem  was extended to several complex variables by H\"{o}rmander \cite{H}, and to the case $p< q$ by Duren \cite {Du}. A simple proof of Duren's theorem in the setting of the unit ball can be found in \cite{P1} for example. Joining all the results, the Carleson-H\"{o}rmander-Duren's theorem asserts that, for $0<p\le q<\infty$, the embedding $I:H^p\rightarrow L^q(\mu):=L^q(\Bn,\mu)$ is bounded if and only if $\mu$ is a $q/p$-Carleson measure. Moreover, one has the estimate $\|I\|_{H^p\rightarrow L^q(\mu)}\asymp \|\mu\|^{1/q}_{CM_{q/p}},$ where, for $\alpha>0$, we set
 $$\|\mu\|_{CM_{\alpha}}:=\sup_{\xi \in \Sn, \delta>0} \mu(B_{\delta}(\xi))\delta^{-n\alpha}.$$

Concerning the description of the boundedness of $A_{\mu,s}: H^p \rightarrow L^q(\Sn)$ when $0<q<p<\infty$, in one dimension several cases where solved in \cite{GLW}. It was proved that, when $q>s$, and $0<q<p<\infty$, then $A_{\mu,s}: H^p(\mathbb{B}_ 1) \rightarrow L^q(\mathbb{S}_ 1)$ is bounded if and only if the function $\widetilde{\mu}$ belongs to $L^{r/s} (\mathbb{S}_ 1)$, where
\[
\widetilde{\mu}(\zeta)=\int_{\Gamma(\zeta)} \frac{d\mu(z)}{(1-|z|^2)^n},\qquad \zeta \in \Sn.
\]
Really, in \cite{GLW}, this result was proved for $s=1$, but in one dimension once such a description is obtained for a fixed $s$, the result for general $s$ follows directly from the strong factorization for functions in Hardy spaces. In \cite{GLW}, they also conjecture that the description obtained holds also for all possible values of $p,q,s$. Our second main result is a proof of this conjecture in all dimensions.
\begin{theorem}\label{MT2}
 Let $0<q<p<\infty$ and $s>0$. Let $\mu$ be a positive Borel measure on $\Bn$. Then the operator $A_{\mu,s}: H^p\rightarrow L^q(\Sn)$ is bounded if and only if $\widetilde{\mu} \in L^{r/s}(\Sn)$ with $r=pq/(p-q)$. Moreover, one has
 \[
 \big \| A_{\mu,s} \big \|_{H^p \rightarrow L^q(\Sn)}\asymp \big \|\widetilde{\mu} \big \|_{L^{r/s}(\Sn)}^{1/s} .
 \]
\end{theorem}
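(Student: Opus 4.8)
I would prove the two estimates $\|A_{\mu,s}\|_{H^p\to L^q(\Sn)}\lesssim\|\widetilde\mu\|_{L^{r/s}(\Sn)}^{1/s}$ and $\|\widetilde\mu\|_{L^{r/s}(\Sn)}^{1/s}\lesssim\|A_{\mu,s}\|_{H^p\to L^q(\Sn)}$ separately; together they give the theorem. The \emph{sufficiency} is short. Writing $f^{\ast}(\zeta)=\sup_{z\in\Gamma(\zeta)}|f(z)|$ for the admissible maximal function, which satisfies $\|f^{\ast}\|_{L^p(\Sn)}\asymp\|f\|_{H^p}$ for every $p>0$, one pulls $|f(z)|\le f^{\ast}(\zeta)$ out of the integral defining $A_{\mu,s}$ to obtain the pointwise bound $A_{\mu,s}f(\zeta)\le f^{\ast}(\zeta)\,\widetilde\mu(\zeta)^{1/s}$. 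Since $0<q<p$ we have $\tfrac qp+\tfrac qr=1$ with $r=pq/(p-q)$, so H\"older's inequality with exponents $p/q$ and $r/q$ yields
\[
\|A_{\mu,s}f\|_{L^q(\Sn)}^{q}\le\int_{\Sn}(f^{\ast})^{q}\,\widetilde\mu^{q/s}\,d\sigma\le\|f^{\ast}\|_{L^p(\Sn)}^{q}\,\|\widetilde\mu\|_{L^{r/s}(\Sn)}^{q/s},
\]
which is the first inequality.

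For the \emph{necessity} I would follow the Khinchine/Luecking strategy. Reducing first by monotone convergence to the case of $\mu$ compactly supported in $\Bn$ (so that all series below are finite), fix a separated sequence $\{a_j\}\subset\Bn$ whose Bergman balls $D_j=D(a_j,R)$ cover $\Bn$ with bounded overlap, and set $t_j=(1-|a_j|^2)^n\asymp\sigma(I_j)$, where $I_j=\{\zeta\in\Sn:a_j\in\Gamma(\zeta)\}$, together with $\lambda_j=\mu(D_j)(1-|a_j|^2)^{-n}$ and $d\nu=d\mu/(1-|z|^2)^n$ (so $\nu(D_j)\asymp\lambda_j$). An elementary sandwiching argument gives $\widetilde\mu\asymp\sum_j\lambda_j\mathbf 1_{I_j}$ up to a change of aperture, and since the $L^{r/s}$-norm of $\widetilde\mu$ (i.e.\ the quasinorm of $\mathbf 1$ in $T^{r/s}_1(\mu)$) is aperture-independent, it suffices to bound $\|\sum_j\lambda_j\mathbf 1_{I_j}\|_{L^{r/s}(\Sn)}$ by a constant times $\|A_{\mu,s}\|^{s}$.

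The test functions are, for a finite family of nonnegative reals $\{b_j\}$, an exponent $N$ large enough (depending on $n,p,s$), and Rademacher functions $\epsilon_j$,
\[
f_t(z)=\sum_j b_j\,\epsilon_j(t)\,k_{a_j}(z),\qquad k_{a_j}(z):=\Big(\frac{1-|a_j|^2}{1-\langle z,a_j\rangle}\Big)^{N/s}.
\]
Two facts are needed. First, $\int_0^1\|f_t\|_{H^p}^p\,dt\lesssim\sum_j b_j^p\,t_j$: for $p\le1$ this is immediate from the $p$-subadditivity of $\|\cdot\|_{H^p}^p$ and $\|k_{a_j}\|_{H^p}^p\asymp t_j$, while for $p>1$ one passes through Calder\'on's area-function characterization of $H^p$ and applies Kahane's inequality inside the area integral. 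Second, by Khinchine's inequality $\int_0^1|f_t(z)|^{\min(q,s)}\,dt\asymp\phi(z)^{\min(q,s)}$ with $\phi(z):=(\sum_j b_j^2|k_{a_j}(z)|^2)^{1/2}$, and since $|k_{a_j}|\asymp1$ on $D_j$ and the $D_j$ have bounded overlap, $\phi^s\gtrsim\sum_j b_j^s\mathbf 1_{D_j}$. Now apply the hypothesis to each $f_t$, integrate in $t$, and invoke Fubini, H\"older in $t$ (valid since $q<p$) and the first fact on the $H^p$-side; on the $L^q$-side, Jensen's inequality (if $q\ge s$) or the reverse Minkowski inequality (if $q\le s$) moves the $t$-average inside, and the second fact — together with $D_j\subset\Gamma(\zeta)$ for $a_j$ in a slightly narrower cone — then yields, after setting $c_j=b_j^s$, the discrete inequality
\[
\int_{\Sn}\Big(\sum_j c_j\lambda_j\mathbf 1_{I_j}\Big)^{q/s}d\sigma\ \lesssim\ \|A_{\mu,s}\|^{q}\Big(\sum_j c_j^{p/s}\,t_j\Big)^{q/p}
\]
for every finite nonnegative sequence $\{c_j\}$.

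It remains to deduce from this that $\|\sum_j\lambda_j\mathbf 1_{I_j}\|_{L^{r/s}(\Sn)}\lesssim\|A_{\mu,s}\|^{s}$ — and it is here that the exponent $r=pq/(p-q)$ is produced, via $ab/(b-a)=r/s$ with $a=q/s<b=p/s$. This is an extremal-sequence (duality) argument of Luecking type: one feeds into the displayed inequality a sequence $\{c_j\}$ built from $\sum_j\lambda_j\mathbf 1_{I_j}$ itself so that the left-hand side recovers a power of $\|\sum_j\lambda_j\mathbf 1_{I_j}\|_{L^{r/s}}$ while the right-hand side stays controlled, which forces the claimed bound; combined with the reduction above this gives $\|\widetilde\mu\|_{L^{r/s}(\Sn)}^{1/s}\lesssim\|A_{\mu,s}\|$. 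I expect the two main obstacles to be: proving the $H^p$-norm estimate cleanly in the range $p>2$, where it genuinely needs square-function/vector-valued machinery rather than a soft argument; and executing the Luecking-type extremal step, including the standard-but-delicate bookkeeping that transfers the estimate between the discretized quantity and $\widetilde\mu$ with the right power of $\|A_{\mu,s}\|$. Together with the sufficiency, this establishes $\|A_{\mu,s}\|_{H^p\to L^q(\Sn)}\asymp\|\widetilde\mu\|_{L^{r/s}(\Sn)}^{1/s}$.
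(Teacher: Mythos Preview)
Your sufficiency argument is correct and identical to the paper's.

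The necessity sketch, however, has a genuine gap precisely at the step you flag as an ``obstacle'', and it is not of the routine kind. First, the Rademacher $H^p$ estimate you state is not what Kahane plus the area function actually gives: for $p>2$ one obtains $\int_0^1\|f_t\|_{H^p}^p\,dt\lesssim\|b\|_{T^p_2(Z)}^p$, not $\sum_j b_j^p t_j$ (the latter is the $T^p_p$ norm, which for $p>2$ is smaller). With the correct right-hand side your discrete inequality reads
\[
\big\|\{c_j\lambda_j\}\big\|_{T^{q/s}_1}\ \lesssim\ \|A_{\mu,s}\|^{s}\,\big\|\{c_j^{1/s}\}\big\|_{T^p_2},
\]
i.e.\ $(\lambda_j)$ is a pointwise multiplier $T^{p/s}_{2/s}\to T^{q/s}_1$. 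By the factorization $T^{q/s}_1=T^{p/s}_{2/s}\cdot T^{r/s}_{2/(2-s)}$ (valid for $s<2$), this forces only $(\lambda_j)\in T^{r/s}_{2/(2-s)}$, which is \emph{strictly weaker} than the needed $T^{r/s}_1$ since $2/(2-s)>1$. The same obstruction arises if one keeps your $\ell^{p/s}$ right-hand side (available only for $p\le 2$): the factorization then yields $T^{r/s}_{p/(p-s)}$, again with inner exponent $>1$. No single ``extremal'' choice of $c_j$ closes this gap, because the quantity $\sum_j\lambda_j^{r/s}t_j$ that duality produces is dominated by, but does not control, $\int_{\Sn}(\sum_j\lambda_j\mathbf 1_{I_j})^{r/s}d\sigma$ when $r/s\ge1$ --- and for $r/s<1$ there is no duality at all. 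This is exactly why the case $r\le s$ was left open in \cite{GLW}.

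The paper's route is substantially different. For $r>s$ it bypasses Khinchine entirely and reduces to Luecking's theorem for the ordinary embedding $H^p\hookrightarrow L^t(\mu)$ with $t=p(r-s)/r$. For $r\le s$ the key new idea is to work not with $|f|^s$ but with the radial derivative weighted by the averaging function $\widehat\mu_t(z)=\mu(D(z,2t))/(1-|z|)^n$: one first shows (Propositions~\ref{pro-1} and~\ref{L1-A}) that $\widetilde\mu\in L^{r/s}$ is \emph{equivalent} to an $L^\sigma$-type inequality for $\int_{\Gamma(\zeta)}|Rf|\,\widehat\mu_t^{-1/2}(1-|z|^2)\,d\mu$, where the inner integral is genuinely linear. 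The Rademacher test then produces an inequality with inner $\ell^1$-structure and $T^{\sigma}_2$ on the right, and it is the \emph{factorization} $T^{4/3}_{4/3}=T^{4\sigma/(3\sigma-2)}_2\cdot (T^\sigma_2)^{1/2}$ (and its analogue for $r<s$) that converts this into the needed $T^{r/s}_1$ statement; the $\widehat\mu_t^{-1/2}$ weight is what makes the exponents match. This mechanism --- using $Rf$, the $\widehat\mu_t$ weight, and tent-space factorization to align the $\ell^2$ from Khinchine with the $\ell^1$ inside $\widetilde\mu$ --- is the missing idea in your outline.
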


The proof of the sufficiency of the condition $\widetilde{\mu}\in L^{r/s} (\Sn)$ is somewhat easy, so that the interest of Theorem \ref{MT2} is its necessity. The case $r>s$ is proved using Luecking description \cite{Lue1} of those positive Borel measures $\mu$ on $\Bn$ for which the embedding $I: H^p \rightarrow L^q(\mu)$ is bounded for $0<q<p<\infty$. That is, Luecking's theorem states that for $0<q<p<\infty$, the Carleson embedding $I: H^p \rightarrow L^q(\mu)$ is bounded if and only if $\widetilde{\mu}\in L^{\frac{p}{p-q}}(\Sn)$, and moreover one has the estimate $ \|I\|_{H^p\rightarrow L^q(\mu)} \asymp  \|\widetilde{\mu} \|^{1/q}_{L^{\frac{p}{p-q}} (\Sn)}$.

The necessity in the case $0<r\le s$ seems to be much more challenging, and our proof uses: Khintchine's type inequalities, that has been widely used in recent years in order to get the necessity part  when studying the boundedness with loss of certain operators; the theory of tent sequence spaces, with a crucial use of the factorization of such tent spaces; and several new techniques whose ideas came to us when studying Luecking's results \cite{Lue1} on embedding derivatives of Hardy spaces into Lebesgue spaces. We hope that these new techniques are going to be useful in the future in order to deal with related problems.\\

Throughout the paper, constants are
often given without computing their exact values, and the value of a constant $C$ may change
from one occurrence to the next. We also use the notation $a\lesssim b$ to indicate that there is a constant $C>0$
with $a\le C b$. Also, the notation $a\asymp b$ means that the two quantities are comparable.\\

The paper is organized as follows: in Section \ref{S-p} we recall some well known results that will be used in the proofs. Theorems \ref{MT1} is proved in Section \ref{s3}, and Theorem \ref{MT2} is proved in Section \ref{s4}, section that is divided in several subsections according to what case is proved: sufficiency, and necessity for $r>s$, $r=s$ and $r<s$, as each of these cases requires different techniques.

\section{Some preliminaries}\label{S-p}
In this section, we are going to collect some results and estimates needed for the proofs of the main theorems of the paper.
\subsection{Tent sequence spaces}
 A sequence of points $\{z_ j\}$ in $\Bn$ is said to be separated if there exists $\delta>0$ such that $\beta(z_ i,z_ j)\ge \delta$ for all $i$ and $j$ with $i\neq j$, where $\beta(z,w)$ denotes the Bergman metric on $\Bn$. We use the notation $D(a,r)=\{z\in \Bn : \beta(z, a)<r\}$ for  the Bergman metric ball of radius $r>0$ centered at a point $a\in \Bn$.

 By \cite[Theorem 2.23]{ZhuBn},
there exists a positive integer $N$ such that for any $0<r<1$ one can
find a sequence $\{a_k\}$ in $\Bn$ with the  properties:
\begin{itemize}
\item[(i)  ] $\Bn=\bigcup_{k}D(a_k,r)$;
\item[(ii) ] The sets $D(a_k,r/4)$ are mutually disjoint;
\item[(iii) ] Each point $z\in\Bn$ belongs to at most $N$ of the sets $D(a_k,4r)$.
\end{itemize}

\noindent Any sequence $\{a_k\}$ satisfying the above conditions is said to be
an $r$-\emph{lattice}
(in the Bergman metric). It is clear that any $r$-lattice is a separated sequence.\\

For $0<p,q<\infty$ and a fixed separated sequence $Z=\{z_ j\}\subset \Bn$, let $T^p_ q(Z)$ consist of those sequences $\lambda=\{\lambda_ j\}$ of complex numbers with
\[ \|\lambda \|_{T_ q^p(Z)}^p :=\int_{\Sn} \!\!\Big (\!\!\sum_{z_ j\in \Gamma(\zeta)} \!|\lambda_ j |^q \Big )^{p/q} d\sigma(\zeta)  <\infty.\]
The following result can be thought as the holomorphic analogue of Lemma 3 in Luecking's paper \cite{Lue1}. The current version can be found in \cite{Ars, Jev, P1}.
\begin{otherp}\label{TKL}
Let $Z=\{z_ j\}$ be a separated sequence in $\Bn$ and let $0<p<\infty$. If $b>n\max(1,2/p)$, then the operator $T_{Z}: T^p_ 2(Z)\rightarrow H^p$ defined by
\[
T_{Z}(\{\lambda_ j\})=\sum_ j \lambda_ j \,\frac{(1-|z_ j|^2)^{b}}{(1-\langle z, z_ j  \rangle )^b}
\]
 is bounded.
\end{otherp}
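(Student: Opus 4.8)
Set $F=T_ Z(\{\lambda_ j\})$. My plan is to prove $\|F\|_{H^p}\lesssim\|\lambda\|_{T^p_ 2(Z)}$ through the Calder\'{o}n area--function description of $H^p$, valid for all $0<p<\infty$:
\[
\|F\|_{H^p}^p\ \asymp\ |F(0)|^p+\int_{\Sn}\Big(\int_{\Gamma(\zeta)}(1-|z|^2)^2\,|RF(z)|^2\,\frac{dv(z)}{(1-|z|^2)^{n+1}}\Big)^{p/2}\,d\sigma(\zeta),
\]
where $R$ is the radial derivative and $dv$ the normalized volume measure. The term $|F(0)|=\big|\sum_ j\lambda_ j(1-|z_ j|^2)^{b}\big|$ is controlled by $\|\lambda\|_{T^p_ 2(Z)}$ by a direct argument: for $p\ge 1$ pair $\lambda$ against $\{(1-|z_ j|^2)^{b-n}\}$, which lies in the dual of $T^p_ 2(Z)$ by separation once $b>n$; for $p<1$ use the atomic decomposition of $T^p_ 2(Z)$ together with $\ell^p\subset\ell^1$. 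Since $|RF(z)|\le\sum_ j|\lambda_ j|(1-|z_ j|^2)^{b}|1-\langle z,z_ j\rangle|^{-(b+1)}=:h(z)$ pointwise, and the second term on the right-hand side equals $\|RF\|^p_{T^p_ 2(\nu)}$ in the paper's notation with $d\nu(z)=(1-|z|^2)\,dv(z)$, monotonicity of the tent quasinorm reduces everything to the single inequality
\[
\|h\|_{T^p_ 2(\nu)}\ \lesssim\ \|\lambda\|_{T^p_ 2(Z)}.
\]
The recurring ingredients will be the separation of $Z$ and the Forelli--Rudin type estimates for integrals $\int_{\Bn}(1-|z|^2)^{a}\,|1-\langle z,u\rangle|^{-s}\,|1-\langle z,v\rangle|^{-t}\,dv(z)$.

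I would first settle the Hilbertian case $p=2$, which already fixes the threshold $b>n$. Here $T^2_ 2(\nu)$ unfolds: $\|h\|_{T^2_ 2(\nu)}^2\asymp\int_{\Bn}|h(z)|^2\,d\nu(z)$, so the claim becomes $\int_{\Bn}|h|^2(1-|z|^2)\,dv\lesssim\sum_ j|\lambda_ j|^2(1-|z_ j|^2)^n$; expanding the square and using the Forelli--Rudin estimates (equivalently, Schur's test with test weights $(1-|z_ j|^2)^{t}$) one checks that this holds precisely when $b>n$. For general $p$ I would then invoke the atomic decomposition of $T^p_ 2(Z)$: write $\lambda=\sum_ k c_ k a_ k$ with $\sum_ k|c_ k|^p\lesssim\|\lambda\|^p_{T^p_ 2(Z)}$, each $a_ k$ an atom supported on the part of $Z$ inside the Carleson tent over a boundary ball $B_ k$ and normalized by $\|a_ k\|_{T^2_ 2(Z)}\le\sigma(B_ k)^{1/2-1/p}$. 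It then suffices to prove that $h_ {a_ k}:=\sum_ j|a_ {k,j}|(1-|z_ j|^2)^{b}|1-\langle z,z_ j\rangle|^{-(b+1)}$ is, up to a fixed multiplicative constant, a $T^p_ 2(\nu)$--molecule adapted to $B_ k$ — concentrated over the tent of $B_ k$ with tails decaying fast enough to feed into the molecular decomposition theorem for tent spaces. (In the range $p>2$ one may alternatively interpolate the $p=2$ estimate with the natural $\mathrm{BMO}$--type endpoint, for which $b>n$ again suffices.)

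The crux, and the point where the sharp hypothesis is consumed, is the estimate of $h_ {a_ k}$: its mass over the tent of $B_ k$ is handled by Cauchy--Schwarz and the $T^2_ 2$--normalization of $a_ k$, while for $\zeta$ far from $B_ k$ the factors $|1-\langle z,z_ j\rangle|^{-(b+1)}$ give a polynomial tail, and requiring the resulting tail of the area function to be $L^p(\Sn)$--integrable is exactly what forces $b>2n/p$ when $0<p<2$. I expect this range to be the only real obstacle: there is no linear duality at hand, and for $1<p<2$ the space $H^p$ (equivalently $T^p_ 2$) is a Banach space, so a bound on each $\|h_ {a_ k}\|_{T^p_ 2(\nu)}$ does not by itself control $\|h\|_{T^p_ 2(\nu)}$; one genuinely needs the almost--orthogonality packaged in the molecular decomposition, and it is the decay of those molecules that uses up the assumption $b>2n/p$.
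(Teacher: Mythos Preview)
The paper does not prove this proposition; it is quoted as a known result with references to \cite{Ars, Jev, P1}, so there is no in-paper argument to compare your attempt against directly.

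That said, your strategy matches the skeleton of the proofs in those references: pass through the area-function description of $H^p$ (Theorem~\ref{AreaT} here), dominate $|RF|$ pointwise by the positive kernel sum $h$, and reduce everything to the tent-space estimate $\|h\|_{T^p_2(\nu)}\lesssim\|\lambda\|_{T^p_2(Z)}$. The divergence is in how that last estimate is carried out. In \cite{Ars, Jev, P1} the inner $L^2$ integral over $\Gamma(\zeta)$ is computed more or less explicitly, using the equivalence $|1-\langle z,z_j\rangle|\asymp|1-\langle\zeta,z_j\rangle|$ for $z\in\Gamma(\zeta)$ together with the standard Forelli--Rudin integral estimates; the remaining outer $L^{p/2}$ integral over $\Sn$ is then handled by elementary means (a maximal-function or Schur-type bound when $p\ge2$, and a further kernel estimate when $p<2$, which is precisely where $b>2n/p$ is consumed). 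Your atomic/molecular route is a legitimate alternative and has the virtue of making the threshold $b>2n/p$ appear transparently as the tail-decay condition on the molecules; the cost is that it imports more machinery. In particular, the obstacle you correctly flag in the range $1<p<2$---that bounding each $\|h_{a_k}\|_{T^p_2(\nu)}$ does not by itself sum---genuinely requires the Coifman--Meyer--Stein molecular theory for tent spaces, whereas the direct kernel computation in the cited references sidesteps this entirely. Your treatment of $|F(0)|$ is fine; the duality argument for $p>1$ and the $\ell^p\hookrightarrow\ell^1$ argument for $p\le1$ both go through once $b>n$, which is implied by the hypothesis.
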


We will also need the following duality result for the tent spaces of sequences \cite{Ars, Jev, Lue1}.

\begin{otherth}\label{TTD1}
Let $1<p,q<\infty$ and $Z=\{a_k\}$ be a separated sequence in $\Bn$. Then the dual of $T^p_q(Z)$ is isomorphic to $T^{p'}_{q'}(Z)$ under the pairing
\[
\langle \lambda ,\mu \rangle_{T^2_2(Z)} =\sum_ k \lambda _ k \,\overline{\mu_ k} (1-|a_ k|^2)^n,\quad \lambda \in T^p_q(Z),\quad \mu \in T^{p'}_{q'}(Z).
\]
\end{otherth}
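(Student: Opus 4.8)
The plan is to prove the duality in two halves. \textbf{First half:} every $\mu\in T^{p'}_{q'}(Z)$ defines, through the stated pairing, a bounded functional on $T^p_q(Z)$, and the resulting map $T^{p'}_{q'}(Z)\to (T^p_q(Z))^*$ is injective. Writing $I_z=\{\zeta\in\Sn:z\in\Gamma(\zeta)\}$ for the shadow of a point $z\in\Bn$ and recalling the standard estimate $\sigma(I_z)\asymp(1-|z|^2)^n$, I would rewrite, by Tonelli's theorem,
\[
\sum_k|\lambda_k|\,|\mu_k|\,(1-|a_k|^2)^n\asymp\int_{\Sn}\sum_{a_k\in\Gamma(\zeta)}|\lambda_k|\,|\mu_k|\;d\sigma(\zeta),
\]
and then apply Hölder with exponents $q,q'$ inside the integral and with exponents $p,p'$ on $\Sn$, using that $\zeta\mapsto\big(\sum_{a_k\in\Gamma(\zeta)}|\lambda_k|^q\big)^{1/q}$ has $L^p(\Sn)$--norm equal to $\|\lambda\|_{T^p_q(Z)}$ and likewise for $\mu$. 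This yields at once the absolute convergence of the pairing and $|\langle\lambda,\mu\rangle_{T^2_2(Z)}|\lesssim\|\lambda\|_{T^p_q(Z)}\|\mu\|_{T^{p'}_{q'}(Z)}$. Injectivity then follows by testing against the unit sequences $e_k\in T^p_q(Z)$ (note $\|e_k\|_{T^p_q(Z)}^p=\sigma(I_{a_k})<\infty$), for which $\langle e_k,\mu\rangle_{T^2_2(Z)}=\overline{\mu_k}\,(1-|a_k|^2)^n$.

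\textbf{Second half:} every $L\in(T^p_q(Z))^*$ arises this way. The device is to realize $T^p_q(Z)$ inside a mixed-norm space. Put $E=\{(\zeta,k)\in\Sn\times\mathbb{N}: a_k\in\Gamma(\zeta)\}$, give $\Sn\times\mathbb{N}$ the product of $\sigma$ with counting measure, and observe that $\lambda\mapsto\widetilde\lambda$, $\widetilde\lambda(\zeta,k):=\lambda_k\,\chi_E(\zeta,k)$, is an isometry of $T^p_q(Z)$ onto the closed subspace of $L^p(\Sn;\ell^q(\mathbb{N}))$ consisting of the functions supported in $E$ that are independent of $\zeta$ on each $k$-section. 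Extending $L$ by Hahn--Banach with the same norm and applying the classical duality $L^p(\Sn;\ell^q)^*\cong L^{p'}(\Sn;\ell^{q'})$ --- valid precisely because $1<p,q<\infty$ --- produces a function $g$, which may be taken supported in $E$, with $\|g\|_{L^{p'}(\Sn;\ell^{q'})}\le\|L\|$ and $L(\lambda)=\sum_k\lambda_k\int_{I_{a_k}}\overline{g(\zeta,k)}\,d\sigma(\zeta)$. Setting $\mu_k:=(1-|a_k|^2)^{-n}\int_{I_{a_k}}g(\zeta,k)\,d\sigma(\zeta)$ gives $L=\langle\cdot,\mu\rangle_{T^2_2(Z)}$ by construction, and the whole matter is reduced to the bound $\|\mu\|_{T^{p'}_{q'}(Z)}\lesssim\|L\|$.

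This bound is where I expect the main difficulty. The geometric input I would use is that there is a constant $C>0$ such that $I_z\subseteq B_{C(1-|z|^2)}(\zeta)$ and $\sigma(I_z)\asymp\sigma\big(B_{C(1-|z|^2)}(\zeta)\big)$ whenever $z\in\Gamma(\zeta)$; this follows from the triangle inequality for the non-isotropic metric $(\xi,\eta)\mapsto|1-\langle\xi,\eta\rangle|^{1/2}$ on $\Sn$. Hence, with $g_k:=|g(\cdot,k)|\,\chi_{I_{a_k}}$ and $M$ the Hardy--Littlewood maximal operator on $(\Sn,\sigma)$ built from the balls $B_\delta(\xi)$, one gets $|\mu_k|\lesssim Mg_k(\zeta)$ for every $\zeta$ with $a_k\in\Gamma(\zeta)$, so that $\big(\sum_{a_k\in\Gamma(\zeta)}|\mu_k|^{q'}\big)^{1/q'}\lesssim\big(\sum_k (Mg_k(\zeta))^{q'}\big)^{1/q'}$ for every $\zeta$. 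Taking $L^{p'}(\Sn)$--norms and invoking the Fefferman--Stein vector-valued maximal inequality --- exactly the step that forces $p,q>1$, equivalently $p',q'>1$ --- I would obtain $\|\mu\|_{T^{p'}_{q'}(Z)}\lesssim\big\|\{g_k\}\big\|_{L^{p'}(\Sn;\ell^{q'})}=\|g\|_{L^{p'}(\Sn;\ell^{q'})}\le\|L\|$. Combined with the first half, this identifies $T^{p'}_{q'}(Z)$ with $(T^p_q(Z))^*$ under the given pairing. The crux is thus that recovering a sequence from its representing function on $E$ requires an averaging over shadows, and the boundedness of that averaging on the tent spaces is a Fefferman--Stein phenomenon --- which is also why the statement is restricted to $1<p,q<\infty$.
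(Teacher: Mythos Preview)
The paper does not supply its own proof of this statement: Theorem \ref{TTD1} is quoted as a known result with references to Arsenovic, Jevtic, and Luecking, so there is no in-paper argument to compare against. Your proposal is essentially the standard proof going back to Luecking's original paper \cite{Lue1}, and it is correct.

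A couple of remarks on the details. In the second half, the duality $L^p(\Sn;\ell^q)^*\cong L^{p'}(\Sn;\ell^{q'})$ you invoke is the Bochner-space duality, which applies because $\ell^q$ is reflexive and separable for $1<q<\infty$; this is indeed the right ingredient. The geometric step --- that $I_{a_k}\subseteq B_{C(1-|a_k|^2)}(\zeta)$ whenever $a_k\in\Gamma(\zeta)$, with comparable measures --- is exactly right and follows from the quasi-triangle inequality for $d(\xi,\eta)=|1-\langle\xi,\eta\rangle|^{1/2}$. The Fefferman--Stein vector-valued maximal inequality is available on $(\Sn,\sigma)$ with the non-isotropic balls because this is a space of homogeneous type; you might make that explicit, since Fefferman--Stein is usually stated on $\mathbb{R}^n$. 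Finally, note that Fefferman--Stein actually holds for $1<p'<\infty$ and $1<q'\le\infty$, so the restriction $1<p,q<\infty$ is slightly stronger than what your argument strictly needs on the $q$ side (the genuine obstruction is $p,p'>1$, together with the Bochner duality requiring $q>1$).
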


A crucial step for the proof of Theorem \ref{MT2} will be an appropriate use of the following result concerning factorization of tent sequence spaces, which can be found in \cite{MPPW19}.

\begin{otherth}\label{TTD2}
Let $0<p,q<\infty$ and $Z=\{a_k\}$ be an $r$-lattice. Let $0<p<p_1, p_2<\infty$,  $0<q<q_1,q_2<\infty$  satisfy
\[
\frac{1}{p}=\frac{1}{p_1}+\frac{1}{p_2},\qquad \frac{1}{q}=\frac{1}{q_1}+\frac{1}{q_2}.
\]
Then
\[
T^p_q(Z)=T^{p_1}_{q_1}(Z)\cdot T^{p_2}_{q_2}(Z).
\]
That is, if $\alpha\in T^{p_ 1}_{q_ 1}(Z)$ and $\beta \in T^{p_2}_{q_2}(Z)$, then $\alpha\cdot \beta \in T^p_q(Z)$ with $$\|\alpha \cdot \beta \|_{T^p_q(Z)}\lesssim \|\alpha\|_{T^{p_ 1}_{q_ 1}(Z)}\cdot \|\beta \|_{T^{p_2}_{q_2}(Z)};$$ and conversely, if $\lambda \in T^p_ q(Z)$, then there are sequences $\alpha \in T^{p_ 1}_{q_ 1}(Z)$ and $\beta \in T^{p_2}_{q_2}(Z)$ such that $\lambda=\alpha \cdot \beta$, and $\|\alpha\|_{T^{p_ 1}_{q_ 1}(Z)}\cdot \|\beta \|_{T^{p_2}_{q_2}(Z)}\lesssim \|\lambda \|_{T^p_q(Z)}$.
\end{otherth}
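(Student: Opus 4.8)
The plan has two parts. The inclusion $T^{p_1}_{q_1}(Z)\cdot T^{p_2}_{q_2}(Z)\subset T^p_q(Z)$, with $\|\alpha\beta\|_{T^p_q(Z)}\lesssim\|\alpha\|_{T^{p_1}_{q_1}(Z)}\,\|\beta\|_{T^{p_2}_{q_2}(Z)}$, follows from two applications of H\"older's inequality. First, for each fixed $\zeta\in\Sn$, H\"older with the conjugate exponents $q_1/q$ and $q_2/q$ (note $q/q_1+q/q_2=1$) applied to the cone sum gives
\[
\Big(\sum_{z_k\in\Gamma(\zeta)}|\alpha_k\beta_k|^q\Big)^{1/q}\le\Big(\sum_{z_k\in\Gamma(\zeta)}|\alpha_k|^{q_1}\Big)^{1/q_1}\Big(\sum_{z_k\in\Gamma(\zeta)}|\beta_k|^{q_2}\Big)^{1/q_2};
\]
raising to the $p$th power, integrating over $\Sn$, and applying H\"older once more with the conjugate exponents $p_1/p$ and $p_2/p$ yields the estimate. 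The content of the theorem is therefore the converse factorization, to which the rest of the plan is devoted.

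Fix $\lambda\in T^p_q(Z)$. The naive splitting $\alpha_k=|\lambda_k|^{q/q_1}$, $\beta_k=(\lambda_k/|\lambda_k|)|\lambda_k|^{q/q_2}$ fails in general, because the cone function of $\alpha$ is then a power of $F(\zeta):=\big(\sum_{z_k\in\Gamma(\zeta)}|\lambda_k|^q\big)^{1/q}$ and one controls only the $L^{pq_1/q}(\Sn)$ norm of it rather than the $L^{p_1}(\Sn)$ norm, unless $p/p_1=q/q_1$. I would correct this by a stopping time weight. Note that $F$ is lower semicontinuous on $\Sn$ and $\|F\|_{L^p(\Sn)}=\|\lambda\|_{T^p_q(Z)}$; put $O_j=\{F>2^j\}$ (open), let $\widehat{O_j}\subset\Bn$ be the tent over $O_j$, and set $R_j=\widehat{O_j}\setminus\widehat{O_{j+1}}$. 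These are pairwise disjoint and, after discarding the points of $Z$ lying in no Kor\'anyi region (which are invisible to every tent quasinorm), each $z_k$ with $\lambda_k\ne0$ lies in exactly one $R_j$; write $G_j=Z\cap R_j$. The two facts I would establish are: (a) if $z_k\in G_j$ and $z_k\in\Gamma(\zeta)$, then $F(\zeta)>2^j$; and (b) the Calder\'on--Zygmund estimate $M_j:=\sup_{\zeta}\big(\sum_{z_k\in\Gamma(\zeta)\cap G_j}|\lambda_k|^q\big)^{1/q}\lesssim 2^j$, where $F$ is built with an aperture large enough relative to the one used for the $T^{p_i}_{q_i}(Z)$ (harmless, by the aperture-independence of tent spaces, see \cite{CMS, Rud}). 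Granting (a) and (b), I would factor each block at the $L^\infty$ endpoint: with $\theta_1=p/p_1$, $\theta_2=p/p_2$ (so $\theta_1+\theta_2=1$), set, for $z_k\in G_j$ with $\lambda_k\ne0$,
\[
\alpha^{(j)}_k=|\lambda_k|^{q/q_1}M_j^{\theta_1-q/q_1},\qquad \beta^{(j)}_k=\frac{\lambda_k}{|\lambda_k|}\,|\lambda_k|^{q/q_2}M_j^{\theta_2-q/q_2},
\]
and $\alpha^{(j)}_k=\beta^{(j)}_k=0$ otherwise, so that $\alpha^{(j)}_k\beta^{(j)}_k=\lambda_k$ on $G_j$ and, by the definition of $M_j$, the cone functions obey $N_{q_1}\alpha^{(j)}(\zeta)\le M_j^{\theta_1}\lesssim 2^{j\theta_1}$ and $N_{q_2}\beta^{(j)}(\zeta)\le M_j^{\theta_2}\lesssim 2^{j\theta_2}$ for every $\zeta$, where $N_t\mu(\zeta):=\big(\sum_{z_k\in\Gamma(\zeta)}|\mu_k|^t\big)^{1/t}$. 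Finally I would set $\alpha=\sum_j\alpha^{(j)}$ and $\beta=\sum_j\beta^{(j)}$; disjointness of the blocks $G_j$ gives $\alpha_k\beta_k=\lambda_k$ and $N_{q_1}\alpha(\zeta)^{q_1}=\sum_j N_{q_1}\alpha^{(j)}(\zeta)^{q_1}$, while by (a) only indices with $2^j<F(\zeta)$ contribute, so summing the geometric series (the exponent $\theta_1 q_1$ is positive) yields $N_{q_1}\alpha(\zeta)\lesssim F(\zeta)^{p/p_1}$, hence $\|\alpha\|_{T^{p_1}_{q_1}(Z)}^{p_1}\lesssim\|F\|_{L^p}^p=\|\lambda\|_{T^p_q(Z)}^p$; symmetrically $\|\beta\|_{T^{p_2}_{q_2}(Z)}^{p_2}\lesssim\|\lambda\|_{T^p_q(Z)}^p$. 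Multiplying and using $p/p_1+p/p_2=1$ finishes the proof.

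The main obstacle is the Calder\'on--Zygmund estimate (b): that the part of $\lambda$ living in the annular tent $R_j=\widehat{O_j}\setminus\widehat{O_{j+1}}$ has $T^\infty_q$ norm $\lesssim 2^j$. The crude attempt --- bound $|\lambda_k|\le 2^{j+1}$ for $z_k\in R_j$ and count --- is hopeless, since $\Gamma(\zeta)\cap R_j$ may contain infinitely many points of the merely separated sequence $Z$; one must instead exploit the non-isotropic geometry of the Kor\'anyi tents. If $z_k\in\Gamma_\gamma(\zeta)\cap R_j$, then, since $z_k\notin\widehat{O_{j+1}}$, the shadow of $z_k$ contains a point $\eta$ with $F(\eta)\le 2^{j+1}$, and $\eta$ automatically lies in the non-isotropic ball $B_{C(1-|z_k|)}(\zeta)$. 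If moreover $\zeta\in O_{j+1}$, the openness of $O_{j+1}$ prevents such dips of $F$ from occurring arbitrarily close to $\zeta$, which forces $\Gamma_\gamma(\zeta)\cap R_j$ to be a \emph{truncated} cone, cut off at a definite scale $\delta_\zeta>0$; a standard cone-slicing lemma (the slice of $\Gamma_\gamma(\zeta)$ at scales $\ge\delta_\zeta$ is contained in $\Gamma_{\gamma'}(\eta)$ for every $\eta\in B_{c\delta_\zeta}(\zeta)$ and a fixed larger aperture $\gamma'$) then lets one dominate the entire sum $\sum_{z_k\in\Gamma_\gamma(\zeta)\cap R_j}|\lambda_k|^q$ by a single $\sum_{z_k\in\Gamma_{\gamma'}(\eta)}|\lambda_k|^q=F(\eta)^q\le 2^{(j+1)q}$ --- which is exactly why $F$ must be built with the wider aperture. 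This is the sequence analogue of the Coifman--Meyer--Stein Calder\'on--Zygmund decomposition of tent spaces \cite{CMS}, and it is the step where the $r$-lattice structure is genuinely used to pass between the continuous and discrete pictures. As an alternative in reserve, one could bypass (b) by molecularizing $\lambda$ into a function on $\Bn$ with comparable $T^p_q$ norm, invoking the known factorization of tent spaces of functions, and discretizing back, at the cost of a Neumann-series iteration to upgrade the resulting approximate identity $\alpha_k\beta_k\approx\lambda_k$ to an exact one.
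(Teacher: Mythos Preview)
The paper does not prove this statement; it is quoted from \cite{MPPW19} without proof, so there is no ``paper's own proof'' to compare against. That said, your argument is correct and is essentially the standard Coifman--Meyer--Stein stopping-time/atomic decomposition adapted to sequences. The easy direction via two H\"older inequalities is fine. For the factorization, your level-set decomposition $R_j=\widehat{O_j}\setminus\widehat{O_{j+1}}$ and the block splitting $\alpha_k^{(j)}=|\lambda_k|^{q/q_1}M_j^{\theta_1-q/q_1}$, $\beta_k^{(j)}=(\lambda_k/|\lambda_k|)|\lambda_k|^{q/q_2}M_j^{\theta_2-q/q_2}$ give the right algebra, and your geometric justification of the key estimate (b) is sound: for $\zeta\notin O_{j+1}$ the bound is trivial since $F(\zeta)\le 2^{j+1}$ already dominates the full cone sum, while for $\zeta\in O_{j+1}$ the truncation argument (any $z_k\in\Gamma_\gamma(\zeta)\cap R_j$ has $1-|z_k|^2\gtrsim\delta_\zeta$, the non-isotropic distance from $\zeta$ to $O_{j+1}^c$) combined with the cone-nesting observation lets you push the whole sum into a single wider cone $\Gamma_{\gamma'}(\eta)$ with $F(\eta)\le 2^{j+1}$. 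One small remark: the argument as you have written it works for any separated sequence, not just an $r$-lattice; the lattice hypothesis is not ``genuinely used'' in the proof of (b), contrary to your closing comment, though it would be needed for the alternative molecularization route you mention.
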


\subsection{Some estimates and the admissible maximal function}
For $z \in \Bn$,  set
$
I(z)=\{\zeta \in \Sn : z \in \Gamma(\zeta)\}.
$
Since $\sigma(I(z))\asymp (1-|z|^2)^n$,  Fubini's theorem yields the estimate:
\begin{equation}\label{EqG}
\int_{\Bn} \varphi(z)d\nu(z)\asymp \int_{\Sn} \left (\int_{\Gamma(\zeta)} \varphi(z) \frac{d\nu(z)}{(1-|z|^2)^{n}} \right )d\sigma(\zeta),
\end{equation}
where $\varphi$ is any positive measurable function and $\nu$ is a finite positive measure.\\

We recall that, for a continuous function $f:\Bn\to \mathbb{C}$, the admissible (non-tangential) maximal function $f^*$ is defined by
$$f^*(\xi)=\sup_{z \in \Gamma(\xi)}|f(z)|,\qquad \xi \in \Sn.$$
An important well known result \cite[Theorem 4.24]{ZhuBn} is the $L^p$-boundedness of the admissible maximal function: for $0<p<\infty$ and $f\in H^p$, one has $\|f^*\|_{L^p(\Sn)}\lesssim \|f\|_{H^p}$.

\subsection{Area function description of Hardy spaces}
Another function we need is the admissible area function $A f$ defined on $\Sn$ by
\[Af(\zeta)=\left ( \int_{\Gamma(\zeta)} |Rf(z)|^2 \,(1-|z|^2)^{1-n}dv(z)\right )^{1/2},\]
where $Rf$ denotes the radial derivative of $f$.
The following result \cite{AB,FS, P1} describing the functions in the Hardy space in terms of the admissible area function, is the version for the unit ball of $\Cn$ of the famous Calder\'{o}n area theorem \cite{Cal}.
\begin{otherth}\label{AreaT}
Let $0<p<\infty$ and $g\in H(\Bn)$. Then $g\in H^p$ if and only if $Ag\in L^p(\Sn)$. Moreover, if $g(0)=0$ then
\[ \|g\|_{H^p}\asymp \|Ag\|_{L^p(\Sn)}.\]
\end{otherth}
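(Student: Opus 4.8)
The plan is to pass first to the normalization $g(0)=0$: replacing $g$ by $g-g(0)$ alters neither $Rg$ nor $Ag$, and since constants lie in $H^p$ it alters membership in $H^p$ not at all and $\|g\|_{H^p}$ by at most $C\,|g(0)|$, so the \emph{iff} together with the norm equivalence for $g(0)=0$ yields the full statement. With $g(0)=0$ in force I would prove $\|Ag\|_{L^p(\Sn)}\asymp\|g^*\|_{L^p(\Sn)}$, where $g^*$ is the admissible maximal function of Subsection 2.2; combined with the maximal theorem $\|g^*\|_{L^p(\Sn)}\lesssim\|g\|_{H^p}$ quoted there and with the trivial reverse bound $\|g\|_{H^p}^p=\lim_{r\to1}\int_{\Sn}|g(r\zeta)|^p\,d\sigma(\zeta)\le\|g^*\|_{L^p(\Sn)}^p$ (valid since $r\zeta\in\Gamma(\zeta)$ once $r$ is close to $1$ and the spherical means of $|g|^p$ increase with $r$), this gives $\|g\|_{H^p}\asymp\|g^*\|_{L^p(\Sn)}\asymp\|Ag\|_{L^p(\Sn)}$.

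The first step is the case $p=2$. Expanding $g=\sum_{k\ge0}g_k$ into homogeneous polynomials and using orthogonality on $\Sn$ gives the Littlewood--Paley identity
\[
\|g\|_{H^2}^2\asymp |g(0)|^2+\int_{\Bn}|Rg(z)|^2\,(1-|z|^2)\,dv(z),
\]
and then the Fubini-type formula \eqref{EqG}, applied with the finite measure $d\nu(z)=(1-|z|^2)\,dv(z)$ and $\varphi(z)=|Rg(z)|^2$, rewrites the integral on the right as $\int_{\Sn}(Ag(\zeta))^2\,d\sigma(\zeta)=\|Ag\|_{L^2(\Sn)}^2$. This settles $p=2$. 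For the general case I would also record the \emph{localized} form of this computation: running the same Green's-formula/Parseval argument on a Kor\'anyi sawtooth region $\mathcal R$ over an open set $O\subset\Sn$ with a cutoff shows, on the one hand, that if $|g|\le\Lambda$ on a slightly enlarged sawtooth then $\int_{\mathcal R}|Rg(z)|^2(1-|z|^2)\,dv(z)\lesssim\Lambda^2\,\sigma(O)$, and, on the other hand, a dual bound controlling $\sup_{\mathcal R}|g|$ by $|g(0)|$ plus the area integral of $Rg$ over a slightly enlarged sawtooth. These two local $L^2$ estimates drive everything that follows.

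The second step is a pair of good-$\lambda$ inequalities. Using the local $L^2$ estimates I would show that for every small $\gamma>0$ there is $C=C(\gamma)$ with
\[
\sigma\big(\{Ag>2\lambda\}\cap\{g^*\le\gamma\lambda\}\big)\le C\gamma^2\,\sigma(\{Ag>\lambda\}),\qquad\lambda>0,
\]
and, when $g(0)=0$,
\[
\sigma\big(\{g^*>2\lambda\}\cap\{A'g\le\gamma\lambda\}\big)\le C\gamma^2\,\sigma(\{g^*>\lambda\}),\qquad\lambda>0,
\]
where $A'g$ denotes the area function built from a wider aperture. Each is obtained by Whitney-decomposing the open set on the right into non-isotropic balls, splitting $\Gamma(\zeta)$ into its part over a fixed dilate of the relevant ball and the complementary part, estimating the complementary part by $\lambda$ by means of a nearby point of the complement of that open set, and applying the appropriate local $L^2$ estimate of Step~1 to the part over the ball. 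Multiplying by $p\lambda^{p-1}$, integrating in $\lambda$, and — after the standard preliminary truncation $g(z)\mapsto g(\rho z)$, $\rho\uparrow 1$, that makes the absorption legitimate — choosing $\gamma$ small, one extracts $\|Ag\|_{L^p(\Sn)}\lesssim\|g^*\|_{L^p(\Sn)}$ from the first inequality and $\|g^*\|_{L^p(\Sn)}\lesssim\|A'g\|_{L^p(\Sn)}$ from the second; since the $L^p(\Sn)$-norms of area functions of different apertures are comparable (just as the tent quasinorms are), the loop closes.

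The routine parts are the $p=2$ identity and the integration in $\lambda$; the real difficulty lies in the geometry of the stopping-time arguments, and most of all in the second good-$\lambda$ inequality, where one must recover pointwise control of $g$ itself on a truncated Kor\'anyi region from the lower-dimensional area information on $Rg$ — it is exactly here that the aperture has to be enlarged and that the hypothesis $g(0)=0$ is used. The other delicate point is making precise the assertion that the truncated cone over a Whitney ball sits inside a Kor\'anyi region through a "good" point (one at which $g^*$, respectively $A'g$, is already under control), which is handled by the comparability of Kor\'anyi regions of different apertures together with elementary estimates on $|1-\langle z,\zeta\rangle|$.
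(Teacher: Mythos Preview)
The paper does not prove this statement; Theorem~\ref{AreaT} is stated in the preliminaries section as a known result, with references to Ahern--Bruna \cite{AB}, Fefferman--Stein \cite{FS}, and \cite{P1}, and is then used as a black box throughout. Note that it carries the \texttt{otherth} environment (lettered Theorem~D), which the paper reserves precisely for results quoted from the literature without proof.

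Your outline is the classical one --- $p=2$ via the Littlewood--Paley identity plus Fubini, then good-$\lambda$ inequalities driven by local $L^2$ estimates on sawtooth regions --- and is essentially the Fefferman--Stein argument as adapted to the ball by Ahern--Bruna. As a sketch it is sound; the genuinely delicate points you flag (the second good-$\lambda$ inequality and the geometric containment of truncated cones) are exactly where the work lies in those references. Since the paper gives no proof to compare against, there is nothing further to contrast.
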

\section{Proof of Theorem \ref{MT1}} \label{s3}

\subsection{Sufficiency} Suppose that $\mu$ is a $\beta$-Carleson measure, with $\beta=1+s(\frac{1}{p}-\frac{1}{q})$.
Assume first that $0<q\le s$. As, for $\alpha=\frac{p(s-q)}{q}$, we have
\[
|f(z)|^s  \le f^*(\zeta) ^{\alpha} |f(z)|^{s-\alpha}, \qquad z\in \Gamma(\zeta),
\]
then
\[
A_{\mu,s} f(\zeta)^q \le (f^*(\zeta))^{\alpha q/s}\left (\int_{\Gamma(\zeta)} |f(z)|^{s-\alpha} \,\frac{d\mu(z)}{(1-|z|^2)^n}\right )^{q/s}.
\]
Hence, by H\"{o}lder's inequality with exponents $s/(s-q)$ and $s/q$, we get
\[
\begin{split}
\big \|A_{\mu,s} f \big \|^q_{L^q(\Sn)} &  \le \|f^* \|_{L^p(\Sn)}^{\frac{\alpha q}{s}} \left (\int_{\Sn} \int_{\Gamma(\zeta)} |f(z)|^{s-\alpha}  \,\frac{d\mu(z)}{(1-|z|^2)^n} \,d\sigma(\zeta) \right )^{q/s}
\\
&\asymp
\|f^* \|_{L^p(\Sn)}^{\frac{ \alpha q}{s}} \left (\int_{\Bn}  |f(z)|^{s-\alpha} \,d \mu(z) \right )^{q/s}.
\end{split}
\]
Since $\frac{(s-\alpha)}{p}=1+s(\frac{1}{p}-\frac{1}{q})=\beta$, by the $L^p$-boundedness of the admissible maximal function and the Carleson-H\"{o}rmander-Duren's theorem, we obtain
\[
\begin{split}
\big \|A_{\mu,s} f \big \|^q_{L^q(\Sn)} & \lesssim \|f \|_{H^p}^{\frac{\alpha q}{s}} \cdot \|f\|_{L^{s-\alpha}(\mu)}^{\frac{q(s-\alpha)}{s}}
\lesssim \|f \|_{H^p}^{\frac{\alpha q}{s}} \cdot \|I\|_{H^p\rightarrow L^{s-\alpha}(\mu)}^{\frac{q(s-\alpha)}{s}} \cdot \|f\|_{H^p}^{\frac{q(s-\alpha)}{s}}
\\
& \asymp \big \|\mu \big \|_{CM_{\frac{(s-\alpha)}{p}}} ^{q/s}\cdot \|f\|_{H^p}^{q}.
\end{split}
\]
Therefore, $A_{\mu,s}: H^p \rightarrow L^q(\Sn)$ is bounded with $$\big \|A_{\mu,s}\big \|_{H^p \rightarrow L^q(\Sn)} \lesssim \big \|\mu \big \|_{CM_{\beta}} ^{1/s}.$$

Next, we consider the case $q>s$. As $\|A_{\mu,s}f \|^s_{L^q(\Sn)}=\|(A_{\mu,s} f)^s \|_{L^{q/s}(\Sn)}$, by duality we must show that, for any function $\varphi \in L^{q/(q-s)}(\Sn)$ with $\varphi \ge 0$, we have
\begin{equation}\label{Eq-T1-A}
\int_{\Sn} (A_{\mu,s} f )^s\,\varphi \,d\sigma \le C\,\big \|\mu \big \|_{CM_{\beta}} \cdot \|\varphi \|_{L^{\frac{q}{q-s}}(\Sn)}\cdot \|f\|_{H^p}^s.
\end{equation}
Now, as $1-|z|^2 \asymp |1-\langle z,\zeta \rangle |$ for $z\in \Gamma(\zeta)$, we have
\[
\begin{split}
\int_{\Sn} (A_{\mu,s} f )^s\,\varphi \,d\sigma &= \int_{\Sn} \int_{\Gamma(\zeta)} |f(z)|^s \,\frac{d\mu(z)}{(1-|z|^2)^n} \,\varphi(\zeta) \,d\sigma(\zeta)
\\
&\lesssim  \int_{\Sn} \int_{\Bn} |f(z)|^s \,\frac{(1-|z|^2)^n \,d\mu(z)}{|1-\langle z,\zeta \rangle |^{2n}} \,\varphi(\zeta) \,d\sigma(\zeta)
\\
& = \int_{\Bn} |f(z)|^s \, P\varphi(z) \,d\mu(z),
\end{split}
\]
where
\[
P\varphi(z)= \int_{\Sn} \varphi (\zeta) \frac{(1-|z|^2)^n}{|1-\langle z,\zeta \rangle |^{2n}} \,d\sigma(\zeta)
\]
is the invariant Poisson transform of $\varphi$. Let
\[
t=\frac{p}{s}+\frac{(q-p)}{q}.
\]
 As $q>s$, then $t>1$ with conjugate exponent
\[
t'=\frac{pq+s(q-p)}{p(q-s)}.
\]
Hence, applying H\"{o}lder's inequality, we obtain
\[
\int_{\Bn} |f(z)|^s \, P\varphi(z) \,d\mu(z) \le \left ( \int_{\Bn} |f(z)|^{st}\,d\mu(z)\right )^{1/t} \left (\int_{\Bn} \big (P\varphi(z) \big )^{t'}\,d\mu(z) \right )^{1/t'}.
\]
As $st/p= 1+s (\frac{1}{p}-\frac{1}{q})=\beta$, by the Carleson-H\"{o}rmander-Duren's theorem we get
\[
\int_{\Bn} |f(z)|^{st}\,d\mu(z) \lesssim \big \|\mu \big \|_{CM_{\beta}} \cdot \|f\|_{H^p}^{st}.
\]
On the other hand, we have
\[
\frac{t'}{(q/(q-s))}= \frac{pq+s(q-p)}{pq}= \beta,
\]
and by the well known version for Poisson integrals of the Carleson-H\"{o}rmander-Duren's theorem, we also have
\[
\int_{\Bn} \big (P\varphi(z) \big )^{t'}\,d\mu(z) \lesssim \|\mu \big \|_{CM_{\beta}} \cdot \|\varphi \|^{t'}_{L^{q/(q-s)}(\Sn)}.
\]
All together, we have
\[
\int_{\Sn} (A_{\mu,s} f )^s\,\varphi \,d\sigma \lesssim \int_{\Bn} |f|^s \, P\varphi \,d\mu \lesssim \|\mu \big \|_{CM_{\beta}} \cdot \|\varphi \|_{L^{\frac{q}{q-s}}(\Sn)}\cdot \|f\|_{H^p}^s,
\]
that proves \eqref{Eq-T1-A}.
\subsection{Necessity}
Set $\beta=1+s(\frac{1}{p}-\frac{1}{q})$. By standard approximation arguments, it is enough to prove the estimate $$\big \|\mu \big \|^{1/s}_{CM_{\beta}} \lesssim \big \| A_{\mu,s} \big \|_{H^p \rightarrow L^q(\Sn)},$$ assuming that $\mu$ is already a $\beta$-Carleson measure. It is well known \cite[Theorem 45]{ZZ} that, for any $t>0$, we have
\[
\big \|\mu \big \|_{CM_{\beta}} \asymp \sup_{a\in \Bn} (1-|a|^2)^t \int_{\Bn} \frac{d\mu(z)}{|1-\langle z,a \rangle |^{n\beta +t}}.
\]
For each $a\in \Bn$, consider the function $f_ a$ defined as
\[
f_ a(z)=\frac{(1-|a|^2)^{t/s}}{(1-\langle z,a \rangle )^{\frac{n\beta+t}{s}}},\qquad z\in \Bn.
\]
Then, by \eqref{EqG},
\[
(1-|a|^2)^t \int_{\Bn} \frac{d\mu(z)}{|1-\langle z,a \rangle |^{n\beta +t}}=\int_{\Bn} |f_ a(z)|^s \,d\mu(z)\asymp \int_{\Sn} A_{\mu,s}f_ a(\zeta)^s \,d\sigma(\zeta),
\]
so that
\begin{equation}\label{ET1-B}
\big \|\mu \big \|_{CM_{\beta}} \asymp \sup_{a\in \Bn}\int_{\Sn} (A_{\mu,s}f_ a)^s \,d\sigma.
\end{equation}
Choose $0<\varepsilon<1$ so that $\varepsilon s <q$. Then, by H\"{o}lder's inequality with exponent $\frac{q}{\varepsilon s}$,
\[
\begin{split}
 \int_{\Sn} (A_{\mu,s}f_ a)^s \,d\sigma &= \int_{\Sn} (A_{\mu,s}f_ a)^{\varepsilon s}\, (A_{\mu,s}f_ a)^{(1-\varepsilon)s} \,d\sigma
\\
& \le \left (\int_{\Sn} (A_{\mu,s} f_ a)^q \,d\sigma  \right ) ^{\varepsilon s/q}  \left (\int_{\Sn} (A_{\mu,s} f_ a)^{\frac{q(1-\varepsilon)s}{(q-\varepsilon s)}} \,d\sigma  \right ) ^{(q-\varepsilon s)/q}.
\end{split}
\]
By the boundedness of $A_{\mu,s}$, we have
\[
\int_{\Sn} (A_{\mu,s} f_ a)^q \,d\sigma \le \big \| A_{\mu,s} \big \|_{H^p\rightarrow L^q(\Sn)} ^q \cdot \|f_ a \|_{H^p}^{q}.
\]
Set
\[
q_ 1= \frac{q(1-\varepsilon)s}{(q-\varepsilon s)}; \qquad p_ 1=\frac{pq(1-\varepsilon)s}{pq+(q-p)s-\varepsilon qs}.
\]
It is easy to check that $1+s (\frac{1}{p_ 1}-\frac{1}{q_ 1})=\beta$. Therefore, by the sufficiency already proved in the previous subsection, we have
\[
\int_{\Sn} (A_{\mu,s} f_ a)^{\frac{q(1-\varepsilon)s}{(q-\varepsilon s)}} \,d\sigma \lesssim \big \| \mu \big \|^{q_ 1/s}_{CM_{\beta}}\cdot \|f_ a \|^{q_ 1}_{H^{p_ 1}}.
\]
Putting these estimates together, we obtain
\[
\int_{\Sn} (A_{\mu,s}f_ a)^s \,d\sigma \lesssim \big \| A_{\mu,s} \big \|_{H^p\rightarrow L^q(\Sn)} ^{\varepsilon s} \cdot \|f_ a \|_{H^p}^{\varepsilon s}\cdot \big \| \mu \big \|^{1-\varepsilon}_{CM_{\beta}}\cdot \|f_ a \|^{(1-\varepsilon)s}_{H^{p_ 1}}.
\]
Bearing in mind \eqref{ET1-B}, we get
\[
\big \|\mu \big \|^{\varepsilon}_{CM_{\beta}} \lesssim \big \| A_{\mu,s} \big \|_{H^p\rightarrow L^q(\Sn)} ^{\varepsilon s} \cdot\sup_{a\in \Bn} \left (\|f_ a \|_{H^p}^{\varepsilon s}\cdot \|f_ a \|^{(1-\varepsilon)s}_{H^{p_ 1}}\right ).
\]
Finally, by the typical integral estimates (see \cite[Theorem 1.12]{ZhuBn}) we have
\[
\|f_ a \|_{H^p}^{\varepsilon s}\cdot \|f_ a \|^{(1-\varepsilon)s}_{H^{p_ 1}} \asymp  1,
\]
that gives the desired result.

\section{Proof of Theorem \ref{MT2}}\label{s4}
\subsection{Sufficiency}
For $f\in H^p$, we have
\[
\begin{split}
\big \|A_{\mu,s} f \big \|^q_{L^q(\Sn)}&=\int_{\Sn} \left ( \int_{\Gamma(\zeta)} |f(z)|^s \,\frac{d\mu(z)}{(1-|z|^2)^n} \right )^{q/s} d\sigma(\zeta)
\\
& \le \int_{\Sn} f^*(\zeta)^{q}\,\widetilde{\mu}(\zeta)^{q/s}\,d\sigma(\zeta).
\end{split}
\]
Then, applying H\"{o}lder's inequality with exponent $\frac{p}{q}$ (that has conjugate exponent $\frac{p}{p-q}$), and using the $L^p$-boundedness of the admissible maximal function, we obtain
\[
\big \|A_{\mu,s} f \big \|^q_{L^q(\Sn)} \le \|f^* \|_{L^p(\Sn)} ^q \cdot \big \|\widetilde{\mu} \big \|_{L^{r/s}(\Sn)}^{q/s}\lesssim \|f \|_{H^p} ^q \cdot \big \|\widetilde{\mu} \big \|_{L^{r/s}(\Sn)}^{q/s}.
\]
Hence $A_{\mu,s}: H^p \rightarrow L^q(\Sn)$ is bounded with $\big \|A_{\mu,s}\big\|_{H^p\rightarrow L^q(\Sn)}\lesssim \big \|\widetilde{\mu}\big \|^{1/s}_{L^{r/s}(\Sn)}.$

\subsection{Necessity: the case $r>s$}
 Suppose $A_{\mu, s}:H^p\rightarrow L^q(\Sn)$ is bounded. From Luecking's theorem, we have
\[
\big \|\widetilde{\mu} \big \|_{L^{r/s}(\Sn)} \asymp \sup_{\|f\|_{H^{p}}=1} \int_{\Bn} |f(z)|^t\,d\mu(z),
\]
where the number $t$ is chosen so that $\frac{p}{p-t}=\frac{r}{s}$, that is,
\[
t=\frac{(r-s)p}{r}.
\]
Observe that $t>0$ as $r>s$, and also
$p>t$. From \eqref{EqG}, we have
\[
\int_{\Bn} |f(z)|^t \,d\mu(z)  \asymp \int_{\Sn} \int_{\Gamma(\zeta)} |f(z)|^t \,\frac{d\mu(z)}{(1-|z|^2)^n} \,d\sigma(\zeta) .
\]
If $t=s$, then $q=s$ and we get directly the result. In case that $t>s$, then $q/s>1$ having conjugate exponent $q/(q-s)=p/(t-s)$. Then H\"{o}lder's inequality together with the $L^{p}$ boundedness of the admissible maximal function $f^*$ gives
\[
\big \|\widetilde{\mu} \big \|_{L^{r/s}(\Sn)} \lesssim  \sup_{\|f\|_{H^{p}}=1} \int_{\Sn} |f^*(\zeta)|^{t-s}\,\big (A_{\mu, s}f(\zeta)\big )^s \,d\sigma (\zeta) \lesssim \big \|A_{\mu, s} \big \|^s_{H^{p}\rightarrow L^{q}(\Sn)}.
\]
 It remains to deal with the case  $t<s$. By standard approximation arguments, it suffices to show the estimate $\|\widetilde{\mu}  \|_{L^{r/s}(\Sn)} \lesssim  \|A_{\mu, s} \|^s_{H^{p}\rightarrow L^{q}(\Sn)}$ assuming that the function $\widetilde{\mu}$ is already in $L^{r/s}(\Sn)$. By  H\"{o}lder's inequality  with exponent $s/t>1$,
we get
\[
 \int_{\Bn} |f(z)|^t \,d\mu(z) \lesssim \int_{\Sn} \big (A_{\mu, s}f(\zeta)\big )^t \,\widetilde{\mu}(\zeta)^{\frac{(s-t)}{s}}\,d\sigma(\zeta).
\]
It is also easy to check that $t<q$ since $q<s$. Then, using by H\"{o}lder's inequality again, now with exponent $q/t$ (that has conjugate exponent $q/(q-t)=r/(s-t)$), gives
\[
\int_{\Bn} |f(z)|^t \,d\mu(z) \lesssim  \big \| A_{\mu, s} f\big \|^{t}_{L^{q}(\Sn)} \cdot \big \|\widetilde{\mu} \big \|^{(s-t)/s}_{L^{r/s}(\Sn)} .
\]
From this, it follows that
\[
\big \|\widetilde{\mu} \big \|_{L^{r/s}(\Sn)}\asymp  \sup_{\|f\|_{H^{p}}=1} \int_{\Bn} |f(z)|^t\,d\mu(z) \lesssim \big \|A_{\mu, s} \big \|^{t}_{H^{p}\rightarrow L^{q}(\Sn)} \cdot \big \|\widetilde{\mu} \big \|^{(s-t)/s}_{L^{r/s}(\Sn)},
\]
and we obtain the desired result.

\subsection{Necessity: the case $r=s$} For the proof of this case, we use an averaging function related to the measure $\mu$. For $t>0$,
let
\[
 \widehat{\mu}_t(z)=\frac{\mu(D(z,2t))}{(1-|z|)^n},\qquad z\in \Bn.
\]
If  $spt (\mu)$ denotes the support of the measure $\mu$, it is then clear that $\widehat{\mu}_t(z)\neq 0$ for $z\in spt (\mu)$.

\begin{proposition}\label{pro-1}
Let $\mu$ a positive Borel measure on $\Bn$ and $1\le \sigma <\infty$. The following conditions are equivalent:
\begin{itemize}
\item[(i)] $\widetilde{\mu}\in L^1(\Sn)$;

\item[(ii)] There is a positive constant $C_{\mu}$ such that
\[
\int_{\Sn} \left (\int_{\Gamma(\zeta)\cap spt (\mu)}  \!\!\!\!|Rf(z)| \,\widehat{\mu}_ t (z)^{-1/2}\,(1-|z|^2)\,\frac{d\mu(z)}{(1-|z|^2)^n}\right )^{\frac{2\sigma}{2+\sigma}}\!\! \!\!\!d\sigma(\zeta)\le C_{\mu}\, \|f\|_{H^{\sigma}}^{\frac{2\sigma}{2+\sigma}}
\]
for all $f\in H^{\sigma}$.
\end{itemize}
 Moreover, one has
\[
C_{\mu}\asymp \|\widetilde{\mu}\|_{L^1(\Sn)}^{\sigma/(2+\sigma)}.
\]
\end{proposition}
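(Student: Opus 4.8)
The plan is to transfer both implications to the tent sequence spaces $T^{p}_{q}(Z)$ of Section~\ref{S-p}, exploiting that the exponent $\tfrac{2+\sigma}{2\sigma}=\tfrac1\sigma+\tfrac12$ is precisely the one appearing in the factorization $T^{u}_{1}(Z)=T^{\sigma}_{2}(Z)\cdot T^{2}_{2}(Z)$ of Theorem~\ref{TTD2}, with $u=\tfrac{2\sigma}{2+\sigma}$. Fix an $r$-lattice $Z=\{z_ j\}$ with $r$ small compared with $t$, let $\{Q_ j\}$ be an associated Borel partition of $\Bn$ with $D(z_ j,r/4)\subset Q_ j\subset D(z_ j,r)$, and recall that $\widehat\mu_ t(z_ j)\asymp \mu(D(z_ j,2t))/(1-|z_ j|)^n$. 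By \eqref{EqG}, the condition $\widetilde\mu\in L^{1}(\Sn)$ is equivalent to $\mu(\Bn)<\infty$, with $\|\widetilde\mu\|_{L^{1}(\Sn)}\asymp \mu(\Bn)\asymp \sum_ j\mu(Q_ j)$, and moreover $\sum_{z_ j\in\Gamma(\zeta)}\widehat\mu_ t(z_ j)\asymp \widetilde\mu(\zeta)$ up to an inessential change of aperture.

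To prove $(i)\Rightarrow(ii)$, I would discretise the inner integral in (ii). For $z\in Q_ j$ one has $1-|z|^2\asymp 1-|z_ j|^2$, the subharmonicity of $|Rf|^{2}$ gives $|Rf(z)|\lesssim (1-|z_ j|)^{-(n+1)/2}\big(\int_{D(z_ j,2r)}|Rf|^{2}\,dv\big)^{1/2}$, and $\widehat\mu_ t(z)^{-1/2}\lesssim (1-|z_ j|)^{n/2}\mu(D(z_ j,r))^{-1/2}$ because $D(z_ j,r)\subset D(z,2t)$. Inserting these together with $\mu(Q_ j\cap\Gamma(\zeta))\le \mu(D(z_ j,r))$, the inner integral in (ii) is bounded by $\sum_{z_ j\in\widetilde\Gamma(\zeta)}\lambda_ j\eta_ j$, where $\lambda_ j=(1-|z_ j|)^{(1-n)/2}\big(\int_{D(z_ j,2r)}|Rf|^{2}\,dv\big)^{1/2}$ and $\eta_ j=(1-|z_ j|)^{-n/2}\mu(D(z_ j,r))^{1/2}$. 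By the Calder\'on area theorem (Theorem~\ref{AreaT}) and the finite overlap of the lattice, $\|\lambda\|_{T^{\sigma}_{2}(Z)}\lesssim \|Af\|_{L^{\sigma}(\Sn)}\lesssim \|f\|_{H^{\sigma}}$, while $\|\eta\|_{T^{2}_{2}(Z)}^{2}\asymp \sum_ j\mu(Q_ j)\asymp \|\widetilde\mu\|_{L^{1}(\Sn)}$. Now Theorem~\ref{TTD2} with $(p,q)=(u,1)$, $(p_ 1,q_ 1)=(\sigma,2)$, $(p_ 2,q_ 2)=(2,2)$ gives $\|\{\lambda_ j\eta_ j\}\|_{T^{u}_{1}(Z)}\lesssim \|\lambda\|_{T^{\sigma}_{2}(Z)}\,\|\eta\|_{T^{2}_{2}(Z)}$, and raising to the power $u$ and integrating over $\Sn$ yields (ii) with $C_ \mu\lesssim \|\widetilde\mu\|_{L^{1}(\Sn)}^{u/2}=\|\widetilde\mu\|_{L^{1}(\Sn)}^{\sigma/(2+\sigma)}$.

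For $(ii)\Rightarrow(i)$, I would test (ii) on a function produced by the atomic synthesis operator of Proposition~\ref{TKL} and randomised by Rademacher signs. After a reduction ensuring $\widetilde\mu\in L^{1}(\Sn)$ a priori (truncating, and letting the truncation tend to infinity by monotone convergence at the end), set $\gamma=\tfrac1\sigma-\tfrac12$, choose a discretisation $\widetilde\mu_{(j)}$ of $\widetilde\mu$ attached to $z_ j$, and put $\lambda_ j=\widehat\mu_ t(z_ j)^{1/2}\widetilde\mu_{(j)}^{\,\gamma}$, so that $\sum_{z_ j\in\Gamma(\zeta)}\lambda_ j^{2}\asymp \widetilde\mu(\zeta)^{2/\sigma}$ and hence $\|\lambda\|_{T^{\sigma}_{2}(Z)}\asymp \|\widetilde\mu\|_{L^{1}(\Sn)}^{1/\sigma}$. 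For $f_ \omega=T_ Z(\{\varepsilon_ j(\omega)\lambda_ j\})$ one has $\|f_ \omega\|_{H^{\sigma}}\lesssim \|\lambda\|_{T^{\sigma}_{2}(Z)}$ by Proposition~\ref{TKL}. Applying (ii) to each $f_ \omega$, averaging in $\omega$, and using Khintchine's (Kahane's) inequality to replace $\mathbf{E}_ \omega|Rf_ \omega(z)|$ by the square function $\big(\sum_ j\lambda_ j^{2}|a_ j(z)|^{2}\big)^{1/2}$, with $a_ j(z)=R\big((1-|z_ j|^{2})^{b}(1-\langle z,z_ j\rangle)^{-b}\big)$, and then keeping on each $Q_ j$ only the diagonal term $j$, for which $|a_ j(z)|\asymp (1-|z_ j|^{2})^{-1}$, one arrives — modulo the comparison of $\widehat\mu_ t$ with the $\mu$-masses of the Bergman balls that appear — at a lower bound of the shape $\int_{\Sn}\big(\sum_{z_ j\in\Gamma(\zeta)}\lambda_ j\theta_ j\big)^{u}\,d\sigma\gtrsim \int_{\Sn}\widetilde\mu(\zeta)^{(1+\gamma)u}\,d\sigma=\|\widetilde\mu\|_{L^{1}(\Sn)}$, the equality because $(1+\gamma)u=1$. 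Combining with (ii) produces the self-improving inequality $\|\widetilde\mu\|_{L^{1}(\Sn)}\lesssim C_ \mu\,\|\lambda\|_{T^{\sigma}_{2}(Z)}^{u}\asymp C_ \mu\,\|\widetilde\mu\|_{L^{1}(\Sn)}^{u/\sigma}$; since $1-u/\sigma=\sigma/(2+\sigma)>0$, absorbing the last factor gives $\|\widetilde\mu\|_{L^{1}(\Sn)}^{\sigma/(2+\sigma)}\lesssim C_ \mu$, which proves $(i)$ and completes the comparison $C_ \mu\asymp \|\widetilde\mu\|_{L^{1}(\Sn)}^{\sigma/(2+\sigma)}$.

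The delicate part is the necessity. Two points require genuine care: the a priori reduction to $\widetilde\mu\in L^{1}$, which is not routine because $\mu$ itself sits, through $\widehat\mu_ t$, in the denominator of (ii), so truncating $\mu$ need not preserve the constant; and, after the Khintchine step, the control of the diagonal contribution, which forces one to compare $\mu(D(z,2t))$ with the $\mu$-masses of the slightly larger or smaller Bergman balls arising when $z$ ranges over $Q_ j\subset D(z_ j,r)$, and to track the apertures of the Kor\'anyi regions throughout. Here the freedom in choosing $r$ relative to $t$, and the precise averaged form of $\widehat\mu_ t$, have to be used; everything else — the subharmonicity estimates, the finite overlap of the lattice, and the invocations of \eqref{EqG} and of Theorems~\ref{AreaT}, \ref{TKL} and \ref{TTD2} — is standard.
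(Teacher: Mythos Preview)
Your forward direction $(i)\Rightarrow(ii)$ is correct, but it is more elaborate than the paper's. The paper avoids discretisation entirely: it applies Cauchy--Schwarz directly to the inner integral, proves the pointwise estimate \eqref{Eq-RS11}
\[
\int_{\Gamma(\zeta)\cap\mathrm{spt}(\mu)}|Rf|^2\,\widehat\mu_t^{-1}(1-|z|^2)^2\frac{d\mu}{(1-|z|^2)^n}\lesssim\int_{\widetilde\Gamma(\zeta)}|Rf|^2\,dv_{1-n},
\]
and finishes with H\"older $(\tfrac{2+\sigma}{2},\tfrac{2+\sigma}{\sigma})$ and Theorem~\ref{AreaT}. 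Your route through Theorem~\ref{TTD2} with $T^{u}_1=T^{\sigma}_2\cdot T^{2}_2$ recovers the same bound.

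The reverse direction $(ii)\Rightarrow(i)$ has a genuine gap. Your scheme hinges on choosing $\lambda_j=\widehat\mu_t(z_j)^{1/2}\widetilde\mu_{(j)}^{\,\gamma}$ with a ``discretisation $\widetilde\mu_{(j)}$ of $\widetilde\mu$'' so that $\sum_{z_j\in\Gamma(\zeta)}\lambda_j^2\asymp\widetilde\mu(\zeta)^{2/\sigma}$ pointwise in $\zeta$. No such sequence can exist in general: since $\lambda_j$ depends only on $z_j$, the equivalence would force $\widetilde\mu_{(j)}\asymp\widetilde\mu(\zeta)$ for every $\zeta$ with $z_j\in\Gamma(\zeta)$, i.e.\ $\widetilde\mu$ essentially constant on each $I(z_j)$, which fails for arbitrary $\mu$. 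There is a second obstruction you anticipated but did not overcome: after Khintchine and restriction to the diagonal, the factor you obtain is $\widehat\mu_{2t}(z_j)^{-1/2}\widehat\mu_t(z_j)$ (the upper bound $\widehat\mu_t(z)\lesssim\widehat\mu_{2t}(z_j)$ is the only one available on $D(z_j,t)$), and this mismatch of scales cannot be collapsed by a single choice of $\lambda$.

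The paper handles precisely this. It keeps the Khintchine conclusion as an inequality valid for \emph{all} $\lambda\in T^{\sigma}_2$, namely
\[
\int_{\Sn}\Big(\sum_{a_k\in\widetilde\Gamma(\zeta)\cap A}|\lambda_k|\,\widehat\mu_{2t}(a_k)^{-1/2}\widehat\mu_t(a_k)\Big)^{\frac{2\sigma}{2+\sigma}}d\sigma(\zeta)\lesssim C_\mu\,\|\lambda\|_{T^{\sigma}_2}^{\frac{2\sigma}{2+\sigma}},
\]
and then argues by duality: it reduces $\widetilde\mu\in L^1$ to $\{\widehat\mu_{2t}(a_k)^{-1/4}\widehat\mu_t(a_k)^{1/2}\}\in T^4_4(Z)$, pairs against an arbitrary $\alpha\in T^{4/3}_{4/3}(Z)$, and only then invokes the factorisation $\alpha_k=\beta_k\lambda_k^{1/2}$ with $\beta\in T^{4\sigma/(3\sigma-2)}_2$ and $\lambda\in T^{\sigma}_2$, feeding this $\lambda$ back into the displayed inequality. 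The two-scale quantity $\widehat\mu_{2t}^{-1/2}\widehat\mu_t$ is dealt with by a separate Cauchy--Schwarz step relating $\sum\widehat\mu_t(a_k)^2\widehat\mu_{2t}(a_k)^{-1}(1-|a_k|^2)^n$ to $\|\widetilde\mu\|_{L^1}$. No single test sequence carries this; the duality--factorisation loop is the missing idea.
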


\begin{proof}
(i) implies (ii). Observe first that
\begin{equation}\label{Eq-RS11}
\begin{split}
J_{\mu} f(\zeta):=&\int_{\Gamma(\zeta)\cap spt (\mu)} |Rf(z)|^2 \,\widehat{\mu}_ t (z)^{-1}\,(1-|z|^2)^2\,\frac{d\mu(z)}{(1-|z|^2)^n}
 \\
 & \lesssim \int_{\widetilde{\Gamma}(\zeta)} |Rf|^2 dv_{1-n},
\end{split}
\end{equation}
 where $\widetilde{\Gamma}(\zeta)$ is another approach region $\Gamma_{\gamma'}(\zeta)$ with a bigger aperture $\gamma'$ so that
\begin{equation}\label{setin}
\bigcup_{z \in \Gamma_\gamma(\zeta)}D(z,t)\subset \Gamma_{\gamma'}(\zeta).
\end{equation}
In order to prove \eqref{Eq-RS11}, we first use the subharmonic result in \cite[Lemma 2.24]{ZhuBn} to obtain
\[
|Rf(z)|^2 \lesssim \frac{1}{(1-|z|^2)^2}\int_{D(z,t)} |Rf(w)|^2 \, dv_{1-n} (w).
\]
This gives
\[
J_{\mu} f(\zeta) \lesssim \int_{\Gamma(\zeta)\cap spt (\mu)}  \!\!\left ( \int_{D(z,t)}|Rf|^2 dv_{1-n} \right ) \,\frac{d\mu(z)}{\mu(D(z,2t))}
\]
For $w\in D(z,t)$, we have $D(w,t)\subset D(z,2t)$, and $\mu(D(w,t))>0$ if $z\in spt (\mu)$. Therefore, if $A$ denotes the set of points $w\in \Bn$ with $\mu(D(w,t))>0$, then
\[
J_{\mu} f(\zeta) \lesssim \int_{\widetilde{\Gamma}(\zeta)\cap A} \, \frac{|Rf(w)|^2}{\mu(D(w,t))} \left ( \int_{D(w,t)}d\mu(z) \right ) dv_{1-n}(w)\lesssim \int_{\widetilde{\Gamma}(\zeta)} |Rf|^2 dv_{1-n},
\]
proving \eqref{Eq-RS11}.

Now, using Cauchy-Schwarz and the inequality in \eqref{Eq-RS11}, we see that
the quantity in (ii) is less than constant times
\[
\int_{\Sn} \left (\int_{\widetilde{\Gamma}(\zeta)} |Rf|^2 dv_{1-n} \right )^{\sigma/(2+\sigma)} \widetilde{\mu}(\zeta)^{\sigma/(2+\sigma)}\,d\sigma(\zeta).
\]
Finally, applying H\"{o}lder's inequality with exponents $(2+\sigma)/2$ and  $(2+\sigma)/\sigma$, together with the area description of functions in Hardy spaces, we see that the quantity in (ii) is less than constant times
\[
\big \|\widetilde{\mu}\big \|_{L^1(\Sn)}^{\sigma/(2+\sigma)}\, \left [\int_{\Sn} \left (\int_{\widetilde{\Gamma}(\zeta)} |Rf|^2 dv_{1-n} \right )^{\sigma/2} \!\!\!\!d\sigma(\zeta)\right ] ^{2/(2+\sigma)} \!\!\!  \lesssim \big \|\widetilde{\mu}\big \|_{L^1(\Sn)}^{\sigma/(2+\sigma)} \cdot \|f\|_{H^{\sigma}}^{\frac{2\sigma}{2+\sigma}}
\]
which shows that (i) implies (ii) with $C_{\mu} \lesssim \big \|\widetilde{\mu}\big \|_{L^1(\Sn)}^{\sigma/(2+\sigma)}$. \\

Now assume that (ii) holds. We test the inequality with the function
\begin{equation}\label{EqT-01}
F_ s (z)=\sum_ k r_ k(s)\,\lambda_ k \left ( \frac{1-|a_ k|^2}{1-\langle z,a_ k\rangle}\right )^{\gamma},
\end{equation}
where $Z=\{a_ k\}$ is a $t$-lattice, $r_ k(s)$ are Rademacher functions, $\gamma >n$ and $\lambda =\{\lambda_ k\} \in T^{\sigma}_ 2 (Z)$. By Proposition \ref{TKL} we know that $F_ s\in H^{\sigma}$ with $\|F_ s\|_{H^{\sigma}}\lesssim \|\lambda \|_{T^{\sigma}_ 2}$. Using the argument with Kahane and Khintchine's inequalities as in \cite{MPPW19}, we obtain
\[
\int_{\Sn} \left ( \sum_{a_ k\in\widetilde{\Gamma}(\zeta)} |\lambda_ k | \, \int_{D(a_ k,t)\cap spt (\mu)}\widehat{\mu}_{t}(z)^{-1/2}\,\frac{d\mu(z)}{(1-|z|^2)^n} \right )^{\frac{2\sigma}{2+\sigma}}\!\!\! \!\!\! \!\!\! d\sigma(\zeta) \lesssim C_{\mu} \,\|\lambda\|_{T^{\sigma}_ 2}^{2\sigma/(2+\sigma)}.
\]
Since, for $z\in D(a_ k,t)$, we have $\widehat{\mu}_ t(z)\lesssim \widehat{\mu}_{2t}(a_ k)$, we get
\begin{equation}\label{EqT1}
\int_{\Sn} \left ( \sum_{a_ k\in\widetilde{\Gamma}(\zeta) \cap A} |\lambda_ k | \, \widehat{\mu}_{2t}(a_ k)^{-1/2}\,\widehat{\mu}_ t(a_ k) \right )^{\frac{2\sigma}{2+\sigma}} \!\!\! \!\!\!d\sigma(\zeta) \lesssim C_{\mu} \,\|\lambda\|_{T^{\sigma}_ 2}^{2\sigma/(2+\sigma)}
,
\end{equation}
where $A=\{w\in \Bn: \widehat{\mu}_ t(w)>0\}$.
It is clear that $\widetilde{\mu}\in L^1(\Sn)$ if and only if $\mu$ is a finite measure with $\|\widetilde{\mu}\|_{L^1(\Sn)}\asymp \mu(\Bn)$. Then, in order to prove that $\widetilde{\mu}\in L^1(\Sn)$, it suffices to show that $\{\widehat{\mu}_ t(a_ k)\}\in T_ 1^1(Z)$, and this is equivalent to $\{\widehat{\mu}_ t(a_ k)^{1/4}\}\in T_ 4^4(Z)$. For proving that, we claim that it is enough to show that
\begin{equation}\label{Eq-T44}
 \Big \{\widehat{\mu}_{2t}(a_ k)^{-1/4}\,\widehat{\mu}_ t(a_ k)^{1/2} \Big \}_{k} \in T^4_ 4 (Z),
\end{equation}
where one takes the indexes $k$ with $\widehat{\mu}_ t(a_ k)\neq 0$. That is, it is enough to prove
\[
\sum_ {k:a_ k \in A} \widehat{\mu}_ t (a_ k)^2 \, \widehat{\mu}_ {2t} (a_ k)^{-1}(1-|a_ k|^2)^n  \lesssim C_{\mu}^{\frac{2+\sigma}{\sigma}}.
\]
Indeed, we have
\[
\Big \|\{\widehat{\mu}_ t(a_ k)^{1/4}\}\Big \|^4 _{T^4_ 4} \asymp  \sum_ {k:a_ k \in A} \widehat{\mu}_ t (a_ k) \, (1-|a_ k|^2)^n = \sum_ {k:a_ k \in A} \mu (D(a_ k,2t)) \asymp \|\widetilde{\mu}\|_{L^1(\Sn)}.
\]
Then, by the Cauchy-Schwarz inequality
\[
\begin{split}
\|\widetilde{\mu}\|_{L^1(\Sn)} &  \lesssim \left (\sum_ {k:a_ k \in A} \!\! \widehat{\mu}_ t (a_ k)^2 \, \widehat{\mu}_ {2t} (a_ k)^{-1}(1-|a_ k|^2)^n \right )^{1/2} \!\!\!
\left (\sum_ {k:a_ k \in A} \!\! \widehat{\mu}_ {2t} (a_ k) \, (1-|a_ k|^2)^n \right )^{1/2}
\\
&\lesssim \|\widetilde{\mu}\|_{L^1(\Sn)}^{1/2}\, \left (\sum_ {k:a_ k \in A} \widehat{\mu}_ t (a_ k)^2 \, \widehat{\mu}_ {2t} (a_ k)^{-1}(1-|a_ k|^2)^n \right )^{1/2}.
\end{split}
\]
Hence,
\[
\|\widetilde{\mu}\|_{L^1(\Sn)}\lesssim \sum_ {k:a_ k \in A} \widehat{\mu}_ t (a_ k)^2 \, \widehat{\mu}_ {2t} (a_ k)^{-1}(1-|a_ k|^2)^n.
\]
To prove \eqref{Eq-T44}, by the duality of tent sequence spaces in Theorem \ref{TTD1}, we must prove that
\[
\sum_ {k: a_ k \in A} |\alpha_ k| \,\widehat{\mu}_{2t}(a_ k)^{-1/4}\,\widehat{\mu}_ t(a_ k)^{1/2} \,(1-|a_ k|^2)^n \lesssim C_{\mu} ^{\frac{2+\sigma}{4\sigma}}\,\|\alpha \|_{T_{4/3}^{4/3}}
\]
for each $\alpha=\{\alpha_k\}\in T_{4/3}^{4/3}(Z)$. By \eqref{EqG} we have
\[
\begin{split}
\sum_ {k: a_ k\in A} |\alpha_ k| &\,\widehat{\mu}_{2t}(a_ k)^{-1/4}  \,\widehat{\mu}_ t(a_ k)^{1/2} \,(1-|a_ k|^2)^n
\\
& \asymp
\int_{\Sn} \left (\sum_ {a_ k\in \Gamma (\zeta)\cap A} |\alpha_ k| \,\widehat{\mu}_{2t}(a_ k)^{-1/4}\,\widehat{\mu}_ t(a_ k)^{1/2}\right ) d\sigma(\zeta).
\end{split}
\]
Using the factorization of tent sequence spaces in Theorem \ref{TTD2}, we can write
\[
\alpha_ k =\beta_ k \cdot \lambda_ k ^{1/2},\qquad \lambda \in T_ 2^{\sigma}(Z),\quad  \beta \in T_ 2^{4\sigma/(3\sigma-2)}(Z).
\]
with $\|\beta \|_{T_ 2^{4\sigma/(3\sigma-2)}}\cdot \|\lambda \|^{1/2}_{T_ 2^{\sigma}}\lesssim \|\alpha\|_{T_{4/3}^{4/3}}.$ Then
\[
\begin{split}
\sum_ {a_ k\in \Gamma (\zeta)\cap A} &|\alpha_ k| \,\widehat{\mu}_{2t}(a_ k)^{-1/4}\,\widehat{\mu}_ t(a_ k)^{1/2}
\\
&\le \Big (\sum_ {a_ k\in \Gamma (\zeta)}  |\beta_ k|^2 \Big  )^{1/2}\left (\sum_ {a_ k\in \Gamma (\zeta)\cap A}  |\lambda_ k| \, \widehat{\mu}_{2t}(a_ k)^{-1/2}\,\widehat{\mu}_ t(a_ k) \right )^{1/2}.
\end{split}
\]
Finally, an application of H\"{o}lder's inequality with exponent $4\sigma/(2+\sigma)$ that has conjugate exponent $4\sigma/(3\sigma-2)$, together with our condition \eqref{EqT1}  gives
\[
\begin{split}
\sum_ {k: a_ k \in A} |\alpha_ k| & \,\widehat{\mu}_{2t}(a_ k)^{-1/4}\,\widehat{\mu}_ t(a_ k)^{1/2} \,(1-|a_ k|^2)^n
\\
& \lesssim \|\beta \|_{T_ 2^{4\sigma/(3\sigma-2)}} \, C_{\mu}^{\frac{2+\sigma}{4\sigma}} \, \|\lambda \|^{1/2}_{T_ 2^{\sigma}} \lesssim C_{\mu}^{\frac{2+\sigma}{4\sigma}}\, \|\alpha\|_{T_{4/3}^{4/3}}.
\end{split}
\]
This proves that $\widetilde{\mu} \in L^1(\Sn)$ with $ \|\widetilde{\mu}\|_{L^1(\Sn)}^{\sigma/(2+\sigma)} \lesssim C_{\mu}$, finishing the proof of the Proposition.
\end{proof}

As a consequence of the previous proposition, we get the following result that will be the key for the proof of the necessity of Theorem \ref{MT2} when $r=s$.

\begin{coro}\label{C1}
Let $\mu$ be a positive Borel measure on $\Bn$. We have that $\widetilde{\mu}\in L^1(\Sn)$ if and only if, for $0<p<\infty$, we have
\[
\int_{\Bn\cap spt (\mu)} |g(z)|^{p/4} |Rf(z)| \,\widehat{\mu}_ t (z)^{-1/2}\,(1-|z|^2)\,d\mu(z) \le K_{\mu}\cdot \|g\|_{H^p}^{p/4}\cdot \|f\|_{H^4}.
\]
Moreover, one has
\[
K_{\mu}\asymp \|\widetilde{\mu}\|_{L^1(\Sn)}^{1/2}.
\]
\end{coro}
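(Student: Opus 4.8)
The plan is to deduce this corollary from Proposition \ref{pro-1} by choosing $\sigma$ appropriately and then splitting the exponent $p/4$ off via Hölder's inequality, using the area-function description of $H^p$ to control the piece carrying the factor $|g|^{p/4}$. Concretely, I would apply Proposition \ref{pro-1} with
\[
\sigma=\frac{4p}{p+4},\qquad \text{so that}\qquad \frac{2\sigma}{2+\sigma}=\frac{8p}{8+3p}\ \ \text{and}\ \ \frac{1}{\sigma}=\frac14+\frac1p .
\]
One checks $1\le\sigma<\infty$ (the lower bound is where I must be careful: it forces $p\ge 4/3$, so for small $p$ the statement has to be read with the convention that $g\in H^p$ still makes sense, or one absorbs the restriction into the hypotheses — I expect the paper only uses it for a range of $p$ where $\sigma\ge1$, coming from $r=s$ and $0<q<p$). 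With this $\sigma$, Proposition \ref{pro-1} says that $\widetilde\mu\in L^1(\Sn)$ is equivalent to the inequality (ii), with $C_\mu\asymp\|\widetilde\mu\|_{L^1(\Sn)}^{\sigma/(2+\sigma)}$.

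Next I would pass from the $T^{\frac{2\sigma}{2+\sigma}}_1$-type bound in (ii) to the bilinear estimate in the corollary. Denote by
\[
\Phi_f(\zeta)=\int_{\Gamma(\zeta)\cap spt(\mu)}|Rf(z)|\,\widehat\mu_t(z)^{-1/2}(1-|z|^2)\,\frac{d\mu(z)}{(1-|z|^2)^n}
\]
the inner integral appearing in (ii), so that (ii) reads $\|\Phi_f\|_{L^{2\sigma/(2+\sigma)}(\Sn)}\lesssim C_\mu^{(2+\sigma)/(2\sigma)}\|f\|_{H^\sigma}$. Using \eqref{EqG} to rewrite the ball integral in the corollary as an integral over $\Sn$ of an integral over $\Gamma(\zeta)$, and bounding $|g(z)|^{p/4}\le g^*(\zeta)^{p/4}$ for $z\in\Gamma(\zeta)$, the left-hand side of the corollary is
\[
\asymp \int_{\Sn}\!\int_{\Gamma(\zeta)\cap spt(\mu)}\!\!|g(z)|^{p/4}|Rf(z)|\,\widehat\mu_t(z)^{-1/2}(1-|z|^2)\,\frac{d\mu(z)}{(1-|z|^2)^n}\,d\sigma(\zeta)\le\int_{\Sn} g^*(\zeta)^{p/4}\,\Phi_f(\zeta)\,d\sigma(\zeta).
\]
Now apply Hölder's inequality with the exponent pair $\big(\tfrac{4p}{p}, \ldots\big)$ — precisely, with exponents $u=\frac{2\sigma}{2+\sigma}$ conjugate... let me state it cleanly: since $\frac{p+4}{p}$ and $\frac{2\sigma}{2+\sigma}$... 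I would verify that $1/\big(\tfrac{2\sigma}{2+\sigma}\big)+\tfrac{p/4}{p}=1$, i.e. $\frac{2+\sigma}{2\sigma}+\frac14=1$, which is exactly $\frac1\sigma=\frac14+\frac1p$ rearranged — wait, it gives $\frac1\sigma=\frac38$... so the correct split is Hölder with exponents $\frac{2\sigma}{2+\sigma}$ and its conjugate $\frac{2\sigma}{3\sigma-2}$, and one must check $\frac{p}{4}\cdot\frac{2\sigma}{3\sigma-2}=p$, i.e. $\frac{\sigma}{2(3\sigma-2)}=1$; with $\sigma=\frac{4p}{p+4}$ this indeed reduces to the stated relation between $p$ and $\sigma$. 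Then
\[
\int_{\Sn} g^*(\zeta)^{p/4}\Phi_f(\zeta)\,d\sigma(\zeta)\le\|g^*\|_{L^p(\Sn)}^{p/4}\,\|\Phi_f\|_{L^{2\sigma/(2+\sigma)}(\Sn)}\lesssim \|g\|_{H^p}^{p/4}\cdot C_\mu^{(2+\sigma)/(2\sigma)}\,\|f\|_{H^\sigma},
\]
using the $L^p$-boundedness of the admissible maximal function and (ii). Finally, $\|f\|_{H^\sigma}\le\|f\|_{H^4}$ is false in general; instead I use the area-function characterization (Theorem \ref{AreaT}) only if needed — but actually since $\sigma<4$ is not automatic either, the cleaner route is: the bilinear form is estimated directly, and one recognizes $\|f\|_{H^\sigma}$ must be replaced by the right quantity. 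The honest fix is that the corollary's right-hand side has $\|f\|_{H^4}$ because one further splits using $\sigma\le 4$: indeed $\sigma=\frac{4p}{p+4}<4$, so $H^4\subset$... no. Here I would instead re-derive (ii) with the roles arranged so that $f\in H^4$ from the start, i.e. apply the argument of Proposition \ref{pro-1} with the area description giving $\|f\|_{H^4}$ after Hölder with exponents tuned to $4$ and $p$; the cleanest is to track that in the proof of Proposition \ref{pro-1} the exponent on $|Rf|^2$ in the area integral can be taken to be $\sigma/2$ with $\sigma=4$ exactly when the companion exponent on $\widetilde\mu$ is $p/4$-adjusted.

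The converse direction is easier: assuming the bilinear inequality of the corollary holds, take $g\equiv 1$ (so $g^*\equiv 1$, $\|g\|_{H^p}=1$), reducing it exactly to condition (ii) of Proposition \ref{pro-1} with $\sigma=4$ after identifying $\frac{2\sigma}{2+\sigma}$... here one should instead note that $g=1$ kills the $g$-factor and the inequality becomes $\int_{\Bn\cap spt(\mu)}|Rf|\,\widehat\mu_t^{-1/2}(1-|z|^2)\,d\mu\lesssim K_\mu\|f\|_{H^4}$, and this is stronger than (a special case of) (ii), hence gives $\widetilde\mu\in L^1$ by the Proposition. Tracking constants, $K_\mu\asymp C_\mu^{(2+\sigma)/(2\sigma)}\asymp\|\widetilde\mu\|_{L^1(\Sn)}^{1/2}$, which is the claimed comparison.

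\medskip

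The main obstacle I anticipate is the bookkeeping of exponents: making the two applications of Hölder's inequality (one splitting off $g^*$, one hidden inside the area-function step that produces $\|f\|_{H^4}$) fit together so that the product of exponents lands exactly on $p$ and on $4$, and simultaneously so that the power of $C_\mu$ coming out equals $1/2$ after substituting $C_\mu\asymp\|\widetilde\mu\|_{L^1}^{\sigma/(2+\sigma)}$. This is not deep but it is the place where an off-by-a-fraction error is easy; I would pin down $\sigma=\tfrac{4p}{p+4}$ first and verify every subsequent exponent identity against it before writing anything else.
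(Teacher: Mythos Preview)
Your proposal has a genuine gap in the converse direction, and the forward direction is tangled because of a wrong choice of $\sigma$.

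\textbf{The right $\sigma$ is $4$, not $4p/(p+4)$.} With $\sigma=4$ one has $2\sigma/(2+\sigma)=4/3$, and Proposition~\ref{pro-1}(ii) reads $\|\Phi_f\|_{L^{4/3}(\Sn)}\lesssim C_\mu^{3/4}\|f\|_{H^4}$, with $C_\mu^{3/4}\asymp\|\widetilde\mu\|_{L^1}^{1/2}$. Your forward-direction idea then works cleanly: after \eqref{EqG} and $|g(z)|^{p/4}\le g^*(\zeta)^{p/4}$, apply H\"older with exponents $4$ and $4/3$ (not the pair you were computing) to get $\|g^*\|_{L^p}^{p/4}\,\|\Phi_f\|_{L^{4/3}}$. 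The mismatch you noticed between $\|f\|_{H^\sigma}$ and $\|f\|_{H^4}$ disappears because $\sigma=4$, and the worry about $\sigma\ge1$ for small $p$ also evaporates.

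\textbf{The converse via $g\equiv 1$ fails.} Setting $g\equiv1$ in the corollary gives only
\[
\|\Phi_f\|_{L^1(\Sn)}\asymp\int_{\Bn\cap spt(\mu)}|Rf|\,\widehat\mu_t^{-1/2}(1-|z|^2)\,d\mu\ \lesssim\ K_\mu\,\|f\|_{H^4},
\]
an $L^1$ bound on $\Phi_f$. Since $\Sn$ is a probability space, $\|\Phi_f\|_{L^1}\le\|\Phi_f\|_{L^{4/3}}$, so this is \emph{weaker} than Proposition~\ref{pro-1}(ii) with $\sigma=4$, not stronger; and it does not match (ii) for any other $\sigma$ either (for $\sigma=2$ the exponent $2\sigma/(2+\sigma)=1$ is right, but then (ii) requires control by $\|f\|_{H^2}$, not $\|f\|_{H^4}$, and $\|f\|_{H^4}\ge\|f\|_{H^2}$ goes the wrong way). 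So the single test $g\equiv1$ cannot recover Proposition~\ref{pro-1}(ii).

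\textbf{What the paper does instead.} The missing idea is Luecking's theorem. For each fixed $f$, set $d\nu_f(z)=|Rf(z)|\,\widehat\mu_t(z)^{-1/2}(1-|z|^2)\chi_{spt(\mu)}(z)\,d\mu(z)$. The corollary's inequality, read as a statement about $g$, is exactly the boundedness of the embedding $H^p\to L^{p/4}(\nu_f)$ with $\|I\|^{p/4}\asymp K_\mu\|f\|_{H^4}$. By Luecking this is equivalent to $\widetilde{\nu_f}\in L^{p/(p-p/4)}=L^{4/3}(\Sn)$ with $\|\widetilde{\nu_f}\|_{L^{4/3}}\asymp K_\mu\|f\|_{H^4}$. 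But $\widetilde{\nu_f}=\Phi_f$, so this is precisely Proposition~\ref{pro-1}(ii) with $\sigma=4$, giving both directions and the constant $K_\mu\asymp\|\widetilde\mu\|_{L^1}^{1/2}$ in one stroke. The point is that quantifying over all $g\in H^p$ (not just $g=1$) is exactly what Luecking's characterization captures.
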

\begin{proof}
Set $d\nu_ f(z)=|Rf(z)| \,\widehat{\mu}_ t (z)^{-1/2}\,(1-|z|^2)\,\chi_{spt(\mu)}(z)\,d\mu(z)$. Then the inequality is
\[
\int_{\Bn}|g(z)|^{p/4}d\nu_ f(z) \le K_{\mu}\cdot  C_ f \,\|g\|_{H^p}^{p/4},
\]
with $C_ f \asymp \|f\|_{H^4}$. Now, by  Luecking's theorem we have that $\widetilde{\nu_ f} \in L^{4/3}(\Sn)$ with
\[
\|\widetilde{\nu_ f}\|_{L^{4/3}(\Sn)}\asymp K_{\mu}\, \|f\|_{H^4},
\] and the result is a consequence of Proposition \ref{pro-1} with $\sigma=4$.
\end{proof}
\mbox{}
\\
Now we are ready for the proof of the necessity in Theorem \ref{MT2} when $r=s$.

\begin{theorem}\label{mt2-c}
 Let $\mu$ be a positive Borel measure on $\Bn$, and  $0<q<p<\infty$,  $s>0$,  with $r=pq/(p-q)=s$. If $A_{\mu, s}:H^p\rightarrow L^q(\Sn)$ is bounded, then $\widetilde{\mu} \in L^{1}(\Sn)$. Moreover, one has
 \[
 \|\widetilde{\mu}\|_{L^1(\Sn)} \lesssim \big \|A_{\mu,s}\big \|^s_{H^p \rightarrow L^q(\Sn)}.
 \]
\end{theorem}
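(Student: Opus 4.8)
The plan is to deduce the theorem from Corollary \ref{C1}. Write $M:=\|A_{\mu,s}\|_{H^p\to L^q(\Sn)}$. Since the constant $K_\mu$ in Corollary \ref{C1} is independent of $p$ and satisfies $K_\mu\asymp\|\widetilde\mu\|_{L^1(\Sn)}^{1/2}$, it suffices to prove, for the $p$ of the statement, the estimate
\[
\int_{\Bn\cap spt(\mu)}|g(z)|^{p/4}\,|Rf(z)|\,\widehat\mu_t(z)^{-1/2}\,(1-|z|^2)\,d\mu(z)\ \lesssim\ M^{s/2}\,\|g\|_{H^p}^{p/4}\,\|f\|_{H^4}
\]
for all $g\in H^p$, $f\in H^4$; this forces $K_\mu\lesssim M^{s/2}$, hence $\|\widetilde\mu\|_{L^1(\Sn)}\asymp K_\mu^2\lesssim M^s$. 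As usual one may assume $\mu$ finite (by exhausting $\Bn$), and, using the homogeneity $A_{\mu,s}\colon H^p\to L^q$ bounded $\Longleftrightarrow A_{\mu,\theta s}\colon H^{\theta p}\to L^{\theta q}$ bounded — which preserves $r=s$ and changes the norm only by the power $1/\theta$ — one may rescale so that $s<2$, placing all the tent sequence spaces below in the range where Theorem \ref{TTD2} applies. Fixing $g$ and applying Luecking's theorem to $d\nu_f(z)=|Rf(z)|\,\widehat\mu_t(z)^{-1/2}(1-|z|^2)\,\chi_{spt(\mu)}(z)\,d\mu(z)$, the displayed inequality is equivalent to the $f$-uniform estimate $\big\|\widetilde{\nu_f}\big\|_{L^{4/3}(\Sn)}\lesssim M^{s/2}\|f\|_{H^4}$, which is exactly condition (ii) of Proposition \ref{pro-1} with $\sigma=4$ and $C_\mu\asymp M^{2s/3}$.

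The engine of the proof is a discrete inequality extracted from the hypothesis by testing $A_{\mu,s}$ against randomized kernel sums, following the scheme of the proof of Proposition \ref{pro-1}. Fix a $t$-lattice $Z=\{a_k\}$, a large $\gamma$, and for $\lambda=\{\lambda_k\}\in T^p_2(Z)$ put $F_u(z)=\sum_k r_k(u)\,\lambda_k\big(\tfrac{1-|a_k|^2}{1-\langle z,a_k\rangle}\big)^{\gamma}$, which by Proposition \ref{TKL} lies in $H^p$ with $\|F_u\|_{H^p}\lesssim\|\lambda\|_{T^p_2(Z)}$ uniformly in $u$. Applying $\|A_{\mu,s}F_u\|_{L^q(\Sn)}\le M\|F_u\|_{H^p}$, integrating the $q$-th power in $u\in[0,1]$, and moving the average through the outer $L^q$-norm, the $\Gamma(\zeta)$-integral and the inner $s$-power by the Khintchine and Kahane inequalities (exactly as in the proof of Proposition \ref{pro-1}), one obtains
\[
\int_{\Sn}\Big(\int_{\Gamma(\zeta)}\Big(\sum_k|\lambda_k|^2\,\Big|\tfrac{1-|a_k|^2}{1-\langle z,a_k\rangle}\Big|^{2\gamma}\Big)^{s/2}\,\frac{d\mu(z)}{(1-|z|^2)^n}\Big)^{q/s}d\sigma(\zeta)\ \lesssim\ M^q\,\|\lambda\|_{T^p_2(Z)}^q
\]
for all $\lambda\in T^p_2(Z)$. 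Discretizing this — using $\widehat\mu_t\asymp\widehat\mu_{2t}(a_k)$ on $D(a_k,t)$ and the bounded overlap of the lattice — yields, in the spirit of \eqref{EqT1}, that multiplication by the sequence of averages $w=\{\widehat\mu_{2t}(a_k)\}$ is bounded from $T^{p/s}_{2/s}(Z)$ into $T^{q/s}_1(Z)$ with norm $\lesssim M^s$ (note $\|\lambda\|_{T^p_2(Z)}^s=\|\{|\lambda_k|^s\}\|_{T^{p/s}_{2/s}(Z)}$).

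The proof is then closed by an argument parallel to the endgame of the proof of Proposition \ref{pro-1}, with \eqref{EqT1} replaced by this last inequality. Because $r=s$, the exponents satisfy $\tfrac{1}{q/s}=\tfrac{1}{p/s}+1$; since we arranged $s<2$, Theorem \ref{TTD2} gives the factorization $T^{q/s}_1(Z)=T^{p/s}_{2/s}(Z)\cdot T^{1}_{2/(2-s)}(Z)$. One reduces the target $\{\widehat\mu_t(a_k)\}\in T^1_1(Z)$ (which is just $\|\widetilde\mu\|_{L^1(\Sn)}\asymp\sum_k\mu(D(a_k,2t))$) to the membership in the Banach-range space $T^4_4(Z)$ of an auxiliary sequence of the type $\{\widehat\mu_{2t}(a_k)^{-1/4}\widehat\mu_t(a_k)^{1/2}\}$, via the Cauchy–Schwarz bootstrap of Proposition \ref{pro-1}; one then bounds the $T^4_4(Z)$-norm of that sequence by testing against $T^{4/3}_{4/3}(Z)$ (duality, Theorem \ref{TTD1}), factoring the test sequence through Theorem \ref{TTD2}, and invoking the multiplier bound above together with Hölder's inequality — exactly as in the passage reducing $\{\widehat\mu_t(a_k)^{1/4}\}\in T^4_4$ in Proposition \ref{pro-1}. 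This produces $\sum_k\mu(D(a_k,2t))\lesssim M^s$, hence $\|\widetilde\mu\|_{L^1(\Sn)}\lesssim M^s$.

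The main obstacle is precisely this passage from a multiplier bound between tent sequence spaces to the genuine integrability $\widetilde\mu\in L^1(\Sn)$. The tempting shortcut — Cauchy–Schwarz applied directly to the integral in the first paragraph, with the $|Rf|$-factor controlled by \eqref{Eq-RS11} and the area description of $H^2$ (and $\|f\|_{H^2}\le\|f\|_{H^4}$) — is too lossy in the critical regime $r=s$: it leaves a term comparable to $\|A_{\mu,p/2}(g^{p/4})\|_{L^a(\Sn)}$ whose boundedness would demand $\widetilde\mu\in L^{2s/p}$, strictly stronger than $\widetilde\mu\in L^1$ unless $p=3q$. Getting the Lebesgue exponents exactly right therefore forces one to keep all of the $\ell^2$-information and to use the sharp factorization of the tent sequence spaces; beyond this, the remaining difficulty is bookkeeping — consistent choice of apertures (so that \eqref{setin} is available), the support restriction $\chi_{spt(\mu)}$, the various averaging radii ($\widehat\mu_t$ versus $\widehat\mu_{2t}$), and the duality for the quasi-Banach tent spaces $T^{q/s}_1(Z)$ when $q/s<1$.
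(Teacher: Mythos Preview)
Your reduction to Corollary \ref{C1} is exactly right and matches the paper. The problem is what comes next: you dismiss as ``too lossy'' precisely the approach that works and that the paper uses. After Cauchy--Schwarz and \eqref{Eq-RS11} one is left with
\[
K_g(\mu)=\int_{\Sn}\Big(\int_{\Gamma(\zeta)}|g(z)|^{p/2}\,\frac{d\mu(z)}{(1-|z|^2)^n}\Big)^{2/3}d\sigma(\zeta),
\]
and the paper estimates this by a short case analysis on $p$ versus $2s$. The point you miss is that the target Lebesgue exponent one lands on is $ps/(p+s)$, and when $r=s$ this equals $q$ exactly. Thus for $p=2s$ one has $K_g(\mu)=\|A_{\mu,s}g\|_{L^q}^q$; for $p>2s$ one peels off $g^*$ and H\"older leaves $\|A_{\mu,s}g\|_{L^{q}}$; and for $p<2s$ one applies H\"older inside with exponent $2s/p$, picks up a factor $\|\widetilde\mu\|_{L^1}^{(2s-p)/3s}$ on the right, again lands on $\|A_{\mu,s}g\|_{L^{q}}$, and bootstraps. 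Your claim that this ``would demand $\widetilde\mu\in L^{2s/p}$'' is simply a miscalculation of the exponents.

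Your alternative route has two genuine gaps. First, the rescaling ``$A_{\mu,s}\colon H^p\to L^q$ bounded $\Longleftrightarrow A_{\mu,\theta s}\colon H^{\theta p}\to L^{\theta q}$ bounded'' for $\theta<1$ is not available on $\Bn$: Lemma \ref{L2} only gives the direction $\theta=m\in\mathbb{N}$ (enlarging $s$), and the opposite direction relies on writing $f=Bg$ with $g$ zero-free, i.e.\ on inner--outer factorization, which fails for $n>1$. Second, and more seriously, the closing step ``exactly as in Proposition \ref{pro-1}'' does not transfer. In \eqref{EqT1} the multiplier is $\widehat\mu_{2t}(a_k)^{-1/2}\widehat\mu_t(a_k)$, inherited from the weight $\widehat\mu_t^{-1/2}$ present in hypothesis (ii); that negative power is exactly what makes the Cauchy--Schwarz bootstrap (reducing $T^1_1$ to the $T^4_4$ membership of $\widehat\mu_{2t}^{-1/4}\widehat\mu_t^{1/2}$) close. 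Your discrete inequality, coming straight from $A_{\mu,s}$, yields only the multiplier $\widehat\mu_{2t}(a_k)$ with no negative power. From $\|w\eta\|_{T^{q/s}_1}\lesssim M^s\|\eta\|_{T^{p/s}_{2/s}}$ you would at best extract $w\in T^{1}_{2/(2-s)}$ via the factorization you wrote, but $T^{1}_{2/(2-s)}\supsetneq T^{1}_{1}$; the duality/factorization argument you sketch does not produce the needed $\widehat\mu_{2t}^{-1/2}$ and therefore does not give $\{\widehat\mu_t(a_k)\}\in T^1_1$.
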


\begin{proof}
We are going to use Corollary \ref{C1}. By Cauchy-Schwarz inequality, the estimate in \eqref{Eq-RS11}, H\"{o}lder's inequality with exponents $4$ and $4/3$, and the area functions description of Hardy spaces, we have
\[
\begin{split}
\int_{\Bn \cap spt (\mu)}& |g(z)|^{p/4} \,|Rf(z)| \,\widehat{\mu}_ t (z)^{-1/2}\,(1-|z|^2)\,d\mu(z)
\\
& \lesssim \int_{\Sn} \! \left ( \int_{\Gamma(\zeta)} \! \! |g(z)|^{p/2}\frac{d\mu(z)}{(1-|z|^2)^n}\right )^{1/2} \!\! \left ( \int_{\widetilde{\Gamma}(\zeta)} \!\! |Rf(z)|^{2}\,dv_{1-n}(z)\right )^{1/2} \!\!\! \! \! d\sigma(\zeta)
\\
& \lesssim \|f\|_{H^4} \left \{\int_{\Sn} \left (\int_{\Gamma(\zeta)} |g(z)|^{p/2} \frac{d\mu(z)}{(1-|z|^2)^n}\right )^{2/3} d\sigma(\zeta)\right \}^{3/4}.
\end{split}
\]
Hence, we need to estimate
\[
K_ g(\mu):=\int_{\Sn} \left (\int_{\Gamma(\zeta)} |g(z)|^{p/2} \frac{d\mu(z)}{(1-|z|^2)^n}\right )^{2/3} d\sigma(\zeta).
\]
If $p=2s$, as $r=s$, we see that $q=2s/3$, so that, in this case, we have
\[
K_ g(\mu)=\Big \| A_{\mu,s} g \Big \|_{L^q(\Sn)}^q,
\]
and the result follows by the boundedness of $A_{\mu,s}: H^p\rightarrow L^q(\Sn)$ and Corollary \ref{C1}.\\
\mbox{}
\\
If $p>2s$, then
\[
\begin{split}
K_ g(\mu) & \le \int_{\Sn} |g^*(\zeta)|^{(p-2s)/3}\,A_{\mu,s} g(\zeta)^{2s/3}\,d\sigma(\zeta)
\\
& \le \|g^*\|_{L^p(\Sn)}^{(p-2s)/3}\cdot \Big \| A_{\mu,s} g \Big \|_{L^{ps/(p+s)}(\Sn)}^{2s/3}.
\end{split}
\]
Since $pq/(p-q)=s$, we see that $ps=q(p+s)$. Then, by the $L^p$-boundedness of the admissible maximal function, we get
\[
K_ g(\mu)  \lesssim \|g\|^p_{H^p} \cdot \big \| A_{\mu,s} \big \|_{H^p \rightarrow L^q(\Sn)} ^{2s/3}
\]
 and we get the result from Corollary \ref{C1}.\\
\mbox{}
\\
If $p<2s$, then we apply H\"{o}lder's inequality with exponent $2s/p>1$ to obtain
\[
\begin{split}
K_ g(\mu) & \le \int_{\Sn} A_{\mu,s} g(\zeta)^{p/3}\,\widetilde{\mu}(\zeta)^{(2s-p)/3s} d\sigma(\zeta)
\\
& \le \big \|\widetilde{\mu} \big \|_{L^1(\Sn)}^{(2s-p)/3s}\cdot \Big \| A_{\mu,s} g \Big \|_{L^{ps/(p+s)}(\Sn)}^{p/3}
 \\
 & \le \big \|\widetilde{\mu} \big \|_{L^1(\Sn)}^{(2s-p)/3s}\cdot \Big \| A_{\mu,s} \Big \|^{p/3}_{H^p\rightarrow L^q(\Sn)} \cdot \| g \|_{H^p}^{p/3}.
\end{split}
\]
Hence, assuming that $\mu$ is already in $L^1(\Sn)$, by Corollary \ref{C1} we have
\[
\big \|\widetilde{\mu} \big \|^{1/2}_{L^1(\Sn)}\lesssim  \big \|\widetilde{\mu} \big \|_{L^1(\Sn)}^{(2s-p)/4s}\cdot \Big \| A_{\mu,s} \Big \|^{p/4}_{H^p\rightarrow L^q(\Sn)}
\]
that implies the estimate $\big \|\widetilde{\mu} \big \|_{L^1(\Sn)}\lesssim  \big \| A_{\mu,s} \big \|^{s}_{H^p\rightarrow L^q(\Sn)}$. The general case follows from an standard approximation argument.
\end{proof}
\mbox{}
\\
\subsection{Necessity: the case $r<s$}
We need first the following lemma.

\begin{lemma}\label{L2}
 Let $0<p, q <\infty$, $s>0$. Let $\mu$ be a positive Borel measure on $\Bn$. If $A_{\mu,s}:H^p \rightarrow L^q(\Sn)$ is bounded, then  $A_{\mu,ms}: H^{mp}\rightarrow L^{mq}(\Sn)$ is bounded for each positive integer $m$. Moreover, one has
\[
\big \|A_{\mu,ms}\big \|_{H^{mp}\rightarrow L^{mq}(\Sn)} \le \big \|A_{\mu,s}\big \|_{H^{p}\rightarrow L^{q}(\Sn)}^{1/m}.
\]
\end{lemma}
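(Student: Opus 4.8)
The plan is to reduce the operator $A_{\mu,ms}$ acting on $H^{mp}$ to the operator $A_{\mu,s}$ acting on $H^p$ via the substitution $f\mapsto f^m$, which is legitimate precisely because $m$ is a positive integer, so that $f^m$ is again holomorphic on $\Bn$ whenever $f$ is. First I would record the pointwise identity: for $f\in H(\Bn)$ and $\zeta\in\Sn$,
\[
A_{\mu,ms}f(\zeta)^{ms}=\int_{\Gamma(\zeta)}|f(z)|^{ms}\,\frac{d\mu(z)}{(1-|z|^2)^n}=\int_{\Gamma(\zeta)}\bigl|f(z)^m\bigr|^{s}\,\frac{d\mu(z)}{(1-|z|^2)^n}=A_{\mu,s}(f^m)(\zeta)^{s},
\]
so that $A_{\mu,ms}f(\zeta)=\bigl(A_{\mu,s}(f^m)(\zeta)\bigr)^{1/m}$ for every $\zeta\in\Sn$. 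This pointwise identity is really the whole content of the lemma; everything else is a manipulation of exponents.

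Next I would raise this identity to the power $mq$ and integrate over $\Sn$ to obtain
\[
\|A_{\mu,ms}f\|^{mq}_{L^{mq}(\Sn)}=\int_{\Sn}A_{\mu,s}(f^m)(\zeta)^{q}\,d\sigma(\zeta)=\|A_{\mu,s}(f^m)\|^{q}_{L^{q}(\Sn)},
\]
equivalently $\|A_{\mu,ms}f\|^{m}_{L^{mq}(\Sn)}=\|A_{\mu,s}(f^m)\|_{L^{q}(\Sn)}$. I would then invoke the elementary fact that $f\in H^{mp}$ implies $f^m\in H^p$ with $\|f^m\|_{H^p}=\|f\|^{m}_{H^{mp}}$, which follows immediately from the defining supremum since $|f^m(r\zeta)|^{p}=|f(r\zeta)|^{mp}$. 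Combining this with the hypothesis that $A_{\mu,s}:H^p\to L^q(\Sn)$ is bounded gives
\[
\|A_{\mu,ms}f\|^{m}_{L^{mq}(\Sn)}=\|A_{\mu,s}(f^m)\|_{L^{q}(\Sn)}\le\|A_{\mu,s}\|_{H^p\to L^q(\Sn)}\,\|f^m\|_{H^p}=\|A_{\mu,s}\|_{H^p\to L^q(\Sn)}\,\|f\|^{m}_{H^{mp}},
\]
and taking $m$-th roots yields $\|A_{\mu,ms}f\|_{L^{mq}(\Sn)}\le\|A_{\mu,s}\|^{1/m}_{H^p\to L^q(\Sn)}\,\|f\|_{H^{mp}}$, which is exactly the assertion.

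There is no genuine obstacle in this argument; the only point worth emphasizing is that $m$ must be a \emph{positive integer}, since this is what guarantees $f^m\in H(\Bn)$ and thus makes the pointwise identity meaningful at the level of Hardy spaces. In the applications, this self-improvement will be used to reduce the verification of the necessity in Theorem~\ref{MT2} (in the range $r<s$) to the case in which all the exponents involved are as large as convenient, simply by replacing $(p,q,s)$ with $(mp,mq,ms)$ for a suitable $m$.
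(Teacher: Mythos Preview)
Your argument is correct and follows exactly the same route as the paper: you use the pointwise identity $A_{\mu,ms}f=(A_{\mu,s}(f^m))^{1/m}$ together with $\|f^m\|_{H^p}=\|f\|_{H^{mp}}^m$, and then apply the boundedness hypothesis. There is nothing to add.
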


\begin{proof}
 If $f\in H^{mp}$, then  $f^m \in H^p$ with $\|f^m\|_{H^p}=\|f\|_{H^{mp}}^{m}$ and
\[
A_{\mu,ms}f(\zeta)=\left[A_{\mu,s}(f^m)(\zeta)\right]^{1/m}.
\]
Therefore,
\[
\begin{split}
\big \|A_{\mu,ms}f \big \|^{mq}_{L^{mq}(\Sn)} & \leq \big \|A_{\mu,s}(f^m) \big \|^{q}_{L^q(\Sn)}
\\
& \le \big \|A_{\mu,s} \big \|_{H^p \rightarrow L^q(\Sn)} ^q  \|f^m\|^q_{H^p}=\big \|A_{\mu,s} \big \|_{H^p \rightarrow L^q(\Sn)} ^q \,\|f\|_{H^{mp}}^{mq},
\end{split}
\]
which proves the desired estimate.
\end{proof}

The next result needed can be considered as the analogue of Proposition \ref{pro-1} and Corollary \ref{C1} for the case $r<s$.

\begin{proposition}\label{L1-A} Suppose $0<q<p<\infty$, $s>0$, $r/s<1$ with $r=pq/(p-q)$. For any positive integer $m$ with
\begin{equation}\label{EqT-10}
 mq>2 \   \textrm{ and }  \   m(p-q)s>2q,
 \end{equation} set
 \begin{equation}\label{EqT-04}
\sigma=\frac{2m(sp-sq-pq)}{m s(p-q)-2q}.
\end{equation}
Let $\mu$ be a positive Borel measure on $\Bn$. The following conditions are equivalent:
\begin{itemize}
\item [(i)] $\widetilde{\mu} \in L^{r/s}(\Sn)$;
\item [(ii)] There is a positive constant $K$ such that
\[
\int_{\Bn \cap spt(\mu)} \!\!\!\! |g(z)|^{\sigma} \,|Rf(z)|^{\sigma} \, \widehat{\mu}_{t}(z)^{-\sigma/2}\,(1-|z|^2)^{\sigma}\,d\mu(z) \le K \,\|g\|^{\sigma}_{H^{2mp}}\,\|f\|^{\sigma}_{H^{2mp}}.
\]
\end{itemize}
Moreover, we have
\[
\|\widetilde{\mu}\|^{1-\sigma/2}_{L^{r/s}(\Sn)}\asymp \sup \int_{\Bn \cap spt(\mu)} |g(z)|^{\sigma} \,|Rf(z)|^{\sigma} \, \widehat{\mu}_{t}(z)^{-\sigma/2}\,(1-|z|^2)^{\sigma}\,d\mu(z),
\]
where the supremum is taken over all functions $f,g\in H^{2mp}$ with $\|f\|_{H^{2mp}}=\|g\|_{H^{2mp}}=1$.
\end{proposition}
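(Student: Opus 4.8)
The strategy mirrors the proof of Proposition \ref{pro-1} and Corollary \ref{C1}, but now the outer Lebesgue exponent is $r/s<1$ rather than $1$, so one must carry a power $\widetilde{\mu}^{1-\sigma/2}$ through the computation and recover the $L^{r/s}$-norm by a self-improvement (bootstrap) argument at the end. First I would check the arithmetic: the choice \eqref{EqT-04} of $\sigma$ is forced by the requirement that, after applying Cauchy--Schwarz to split $|Rf|^{\sigma}\widehat{\mu}_t^{-\sigma/2}(1-|z|^2)^{\sigma}$ into an area-function piece and a piece controlled by \eqref{Eq-RS11}, and then H\"older in the sphere variable, the exponents match up so that one factor is an $H^{2mp}$ area norm and the other produces $\widetilde{\mu}^{r/s}$ to a suitable power; conditions \eqref{EqT-10} guarantee $\sigma<2$ (so that $1-\sigma/2>0$) and that all intermediate exponents exceed $1$. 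The lattice $Z=\{a_k\}$ is a $t$-lattice, $\widehat{\mu}_{2t}(a_k)\gtrsim\widehat{\mu}_t(z)$ for $z\in D(a_k,t)$, and $A=\{w:\widehat{\mu}_t(w)>0\}\supset spt(\mu)$ up to the usual enlargement of aperture as in \eqref{setin}.

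\textbf{(i) $\Rightarrow$ (ii).} Assume $\widetilde{\mu}\in L^{r/s}(\Sn)$. Using \eqref{EqG} to pass to the Kor\'anyi tents, then the pointwise subharmonicity bound for $|Rf|^2$ (as in the derivation of \eqref{Eq-RS11}), Cauchy--Schwarz in $z$ to separate $|g|^{\sigma}$ from $|Rf|^{\sigma}\widehat{\mu}_t^{-\sigma/2}(1-|z|^2)^{\sigma}$, and \eqref{Eq-RS11} itself, I would bound the left side of (ii) by a constant times
\[
\int_{\Sn}\Big(\int_{\Gamma(\zeta)}|g(z)|^{2\sigma}\,\frac{d\mu(z)}{(1-|z|^2)^n}\Big)^{1/2}\Big(\int_{\widetilde{\Gamma}(\zeta)}|Rf|^2\,dv_{1-n}\Big)^{\sigma/2}d\sigma(\zeta).
\]
H\"older in $\zeta$ with a carefully chosen triple of exponents $(2,\,2/\sigma,\,\cdot)$ and the area description (Theorem \ref{AreaT}) turns the $Rf$-factor into $\|f\|_{H^{2mp}}^{\sigma}$; what remains is $\int_{\Sn}\big(A_{\mu,2\sigma}g\big)^{\text{(something)}}\widetilde{\mu}(\zeta)^{\text{(something)}}d\sigma$, which by a further H\"older with exponent related to $r/s$ and the $L^p$-boundedness of $g^*$ is at most $\|\widetilde{\mu}\|_{L^{r/s}}^{1-\sigma/2}\|g\|_{H^{2mp}}^{\sigma}$ — here one uses $2mp$ as the Hardy exponent precisely so that the maximal-function estimate absorbs the leftover power of $g^*$, exactly as in the computation of $K_g(\mu)$ in Theorem \ref{mt2-c}. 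This gives $K\lesssim\|\widetilde{\mu}\|_{L^{r/s}}^{1-\sigma/2}$.

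\textbf{(ii) $\Rightarrow$ (i).} This is the substantial direction. I would test (ii) with randomized reproducing kernels
\[
g_s(z)=\sum_k r_k(s)\,\mu_k\Big(\frac{1-|a_k|^2}{1-\langle z,a_k\rangle}\Big)^{\gamma},\qquad f_s(z)=\sum_k r_k(s)\,\nu_k\Big(\frac{1-|a_k|^2}{1-\langle z,a_k\rangle}\Big)^{\gamma},
\]
with $\gamma$ large, $\mu\in T^{2mp}_2(Z)$, $\nu\in T^{2mp}_2(Z)$, so that $\|g_s\|_{H^{2mp}}\lesssim\|\mu\|_{T^{2mp}_2}$ and likewise for $f_s$ by Proposition \ref{TKL}; since $Rf_s$ is again of comparable reproducing-kernel type (differentiation only changes $\gamma$ and inserts a harmless factor $\asymp(1-|a_k|^2)^{-1}$), the Kahane--Khintchine argument of \cite{MPPW19}, integrated in $s$, converts the left side of (ii) into a lower bound of the form
\[
\int_{\Sn}\Big(\sum_{a_k\in\widetilde{\Gamma}(\zeta)\cap A}|\mu_k|\,|\nu_k|\,\widehat{\mu}_{2t}(a_k)^{-1/2}\,\widehat{\mu}_t(a_k)\Big)^{\sigma}d\sigma(\zeta)\ \lesssim\ K\,\|\mu\|^{\sigma}_{T^{2mp}_2}\,\|\nu\|^{\sigma}_{T^{2mp}_2}.
\]
Now the goal is to show $\{\widehat{\mu}_t(a_k)\}\in T^{r/s}_1(Z)$, equivalently (by raising to a power and using the factorization Theorem \ref{TTD2} and duality Theorem \ref{TTD1} of tent sequence spaces) that $\{\widehat{\mu}_{2t}(a_k)^{-1/2}\widehat{\mu}_t(a_k)\}$ lies in an appropriate tent space; one feeds an arbitrary dual sequence $\{\alpha_k\}$ into the displayed inequality, factors $\alpha_k=\mu_k\nu_k$ with $\mu,\nu$ in the right tent spaces (this is exactly where Theorem \ref{TTD2} is used, with the two factor exponents dictated by $\sigma$ and $2mp$), and applies \eqref{EqG} together with H\"older in $\zeta$ to close the estimate. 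This yields $\|\widetilde{\mu}\|_{L^{r/s}}^{1-\sigma/2}\lesssim K$.

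\textbf{The bootstrap.} The delicate point, and the one I expect to be the main obstacle, is that the argument only ever produces $\|\widetilde{\mu}\|_{L^{r/s}}^{1-\sigma/2}$ on the left and never $\|\widetilde{\mu}\|_{L^{r/s}}$ itself, because the $|g|^{\sigma}$ factor is needed to generate the missing power of $\widetilde{\mu}$ via H\"older; so one cannot directly conclude finiteness of $\|\widetilde{\mu}\|_{L^{r/s}}$ — one only gets an a priori inequality. As in Theorem \ref{mt2-c}, I would therefore first truncate $\mu$ (replace it by its restriction to $\{|z|\le\rho\}$, which is a finite measure with bounded $\widetilde{\mu}$), run the whole argument for the truncation to get $\|\widetilde{\mu_\rho}\|_{L^{r/s}}^{1-\sigma/2}\lesssim K_\rho\le K$ with constant independent of $\rho$, hence $\|\widetilde{\mu_\rho}\|_{L^{r/s}}\lesssim K^{2/(2-\sigma)}$ uniformly, and finally let $\rho\to1$ and invoke monotone convergence. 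One must be slightly careful that $1-\sigma/2>0$ (which is conditions \eqref{EqT-10}) so that this last step is legitimate, and that the enlargement of aperture in \eqref{setin} does not affect the tent-space norms up to constants. The remaining bookkeeping — verifying that the exponent $\sigma$ in \eqref{EqT-04} is exactly the one for which the chain of H\"older inequalities balances, and that $mq>2$, $m(p-q)s>2q$ make every exponent admissible — is routine but must be written out carefully.
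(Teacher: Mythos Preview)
Your outline has the right architecture, and the bootstrap/approximation at the end is exactly what the paper does. There are, however, two concrete problems, one minor and one a genuine gap.

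\textbf{Minor issue in (i)$\Rightarrow$(ii).} Your displayed bound
\[
\int_{\Sn}\Big(\int_{\Gamma(\zeta)}|g|^{2\sigma}\,\frac{d\mu}{(1-|z|^2)^n}\Big)^{1/2}\Big(\int_{\widetilde{\Gamma}(\zeta)}|Rf|^2\,dv_{1-n}\Big)^{\sigma/2}d\sigma(\zeta)
\]
cannot come from ``Cauchy--Schwarz in $z$'': the exponents $1/2$ and $\sigma/2$ do not sum to $1$ unless $\sigma=1$, and if you did use Cauchy--Schwarz the second factor would carry $|Rf|^{2\sigma}\widehat{\mu}_t^{-\sigma}(1-|z|^2)^{2\sigma}$, to which \eqref{Eq-RS11} does not directly apply. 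The paper instead applies H\"older with exponent $2/\sigma$ \emph{keeping $|g|^2|Rf|^2$ together}: this produces the factor $\widetilde{\mu}(\zeta)^{(2-\sigma)/2}$ at once, the remaining factor is controlled by \eqref{Eq-RS11} with $gRf$ in place of $Rf$ (legitimate since $gRf$ is holomorphic), and then a single H\"older in $\zeta$ with exponent $mp/\sigma$ plus $g^*$ and Theorem~\ref{AreaT} finishes. Your route can probably be repaired, but as written the first step is wrong.

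\textbf{Real gap in (ii)$\Rightarrow$(i).} The inequality you claim to obtain from Khintchine,
\[
\int_{\Sn}\Big(\sum_{a_k\in\widetilde{\Gamma}(\zeta)\cap A}|\mu_k|\,|\nu_k|\,\widehat{\mu}_{2t}(a_k)^{-1/2}\,\widehat{\mu}_t(a_k)\Big)^{\sigma}d\sigma(\zeta)\ \lesssim\ K\,\|\mu\|^{\sigma}_{T^{2mp}_2}\,\|\nu\|^{\sigma}_{T^{2mp}_2},
\]
does \emph{not} follow from (ii) by inserting two random kernel-sums. The left side of (ii) is an integral over $\Bn$; after \eqref{EqG} it is $L^1$ in $\zeta$, so Kahane--Khintchine (even with independent Rademacher families for $g$ and $f$) only yields the estimate with outer exponent $1$ and inner summands $|\mu_k|^{\sigma}|\nu_k|^{\sigma}\widehat{\mu}_{2t}(a_k)^{-\sigma/2}\widehat{\mu}_t(a_k)$. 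With outer exponent $1$ the tent-space duality you want to invoke (Theorem~\ref{TTD1}) is unavailable, and the bootstrap cannot close for the full range of $\sigma$. The paper's crucial extra step---absent from your plan---is to first \emph{freeze} $f$ and apply Luecking's theorem to (ii) viewed as an embedding $H^{2mp}\hookrightarrow L^{\sigma}(\nu_f)$ in the variable $g$. Luecking converts (ii) into
\[
\int_{\Sn}\Big(\int_{\Gamma(\zeta)\cap spt(\mu)}|Rf|^{\sigma}\widehat{\mu}_t^{-\sigma/2}(1-|z|^2)^{\sigma}\frac{d\mu}{(1-|z|^2)^n}\Big)^{\frac{2mp}{2mp-\sigma}}d\sigma(\zeta)\lesssim \|f\|_{H^{2mp}}^{\frac{2mp\sigma}{2mp-\sigma}},
\]
which now has outer exponent $\frac{2mp}{2mp-\sigma}>1$. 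Only at this point is a \emph{single} random test function $F_s$ inserted and the Kahane--Khintchine argument run, giving \eqref{EqT-02} with inner summands $|\lambda_k|^{\sigma}\widehat{\mu}_{2t}(a_k)^{-\sigma/2}\widehat{\mu}_t(a_k)$. The subsequent duality uses $T^{4/(2+\sigma)}_{4r/(4r-2s+\sigma s)}$ and the factorization $\alpha_k=\beta_k\lambda_k^{\sigma/2}$ with $\lambda\in T^{2mp}_2$, $\beta\in T^{4mp/(2mp+\sigma)}_2$; the exponents match precisely because of the Luecking step. So you should not test with two random functions; eliminate $g$ via Luecking first, then randomize $f$ alone.
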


\begin{proof} First, suppose $\widetilde{\mu}\in L^{r/s}(\Sn)$, and let $f,g\in H^{2mp}$. Proceeding in the same way as in the proof of the estimate \eqref{Eq-RS11}, we have
\[
\int_{\Gamma(\zeta)\cap spt (\mu)} \!\! \! \! |g(z)|^{2}  |Rf(z)|^2 \,\widehat{\mu}_ t (z)^{-1}\,(1-|z|^2)^2\,\frac{d\mu(z)}{(1-|z|^2)^n} \lesssim \int_{\widetilde{\Gamma}(\zeta)} |g|^{2} |Rf|^2 dv_{1-n}.
\]
Observe that, as $r/s<1$, then $s(p-q)-pq>0$. This, together with \eqref{EqT-10} tells us that  $\sigma>0$. Also, as $mp>2$, it follows that $\sigma<2$. Now, an application of the estimate \eqref{EqG} together with  H\"{o}lder's inequality with exponent $2/\sigma>1$ gives
\[
\begin{split}
 &\int_{\Bn \cap spt(\mu)}\!\!\! |g(z)|^{\sigma} \,|Rf(z)|^{\sigma} \, \widehat{\mu}_{t}(z)^{-\sigma/2}\,(1-|z|^2)^{\sigma}\,d\mu(z)
\\
& \lesssim\int_{\Sn} \left(\int_{\Gamma(\zeta)\cap spt (\mu)} \!\!\!\! |g(z)|^{2} \,|Rf(z)|^{2} \, \widehat{\mu}_{t}(z)^{-1}\,\frac{(1-|z|^2)^2 d\mu(z)}{(1-|z|^2)^n}\right)^{\sigma/2} \!\!\! \widetilde{\mu}(\zeta)^{\frac{2-\sigma}{2}}d\sigma(\zeta)
\\
&\lesssim \int_{\Sn} \left(\int_{\Gamma(\zeta)}|g(z)|^{2} \,|Rf(z)|^{2} \,dv_{1-n}(z)\right)^{\sigma/2}\widetilde{\mu}(\zeta)^{\frac{2-\sigma}{2}}d\sigma(\zeta).
\end{split}
\]
As $0<\sigma<2$, the condition $r<s$ implies that $\frac{2r}{s(2-\sigma)}>1$ with conjugate exponent given by
\[
\frac{2r}{2r-s(2-\sigma)}=\frac{mp}{\sigma}.
\]
Hence, by H\"{o}lder's inequality with exponent $\frac{2r}{s(2-\sigma)}$, we obtain
\[
\begin{split}
 &\int_{\Bn \cap spt(\mu)} |g(z)|^{\sigma} \,|Rf(z)|^{\sigma} \, \widehat{\mu}_{t}(z)^{-\sigma/2}\,(1-|z|^2)^{\sigma}\,d\mu(z)
\\
& \lesssim\left(\int_{\Sn} \left(\int_{\Gamma(\zeta)}|g(z)|^{2} \,|Rf(z)|^{2} \,dv_{1-n}(z)\right)^{\frac{mp}{2}} \! \! \! d\sigma(\zeta)\right)^{\frac{\sigma}{mp}}\left(\int_{\Sn}\widetilde{\mu}(\zeta)^{r/s}d\sigma(\zeta)\right)^{\frac{s(2-\sigma)}{2r}}.
\end{split}
\]
By Cauchy-Schwarz inequality, the $L^p$-boundedness of the admissible maximal function and the area description of functions in Hardy spaces, we have
\[
\begin{split}
 \int_{\Sn}& \left(\int_{\Gamma(\zeta)}|g(z)|^{2} \,|Rf(z)|^{2} \,dv_{1-n}(z)\right)^{\frac{mp}{2}}d\sigma(\zeta)
\\
& \leq\int_{\Sn} g^{*}(\zeta)^{mp}\left(\int_{\Gamma(\zeta)} \,|Rf(z)|^{2} \,dv_{1-n}(z)\right)^{\frac{mp}{2}}d\sigma(\zeta)
\\
& \lesssim\|g\|^{mp}_{H^{2mp}}\cdot \|f\|^{mp}_{H^{2mp}}.
\end{split}
\]
Thus,
\[
\begin{split}
\sup  \int_{\Bn \cap spt (\mu)} \! \! |g(z)|^{\sigma} \,|Rf(z)|^{\sigma} \, \widehat{\mu}_{t}(z)^{-\sigma/2}\,(1-|z|^2)^{\sigma}\,d\mu(z) \lesssim \big \|\widetilde{\mu}\big \|^{\frac{2-\sigma}{2}}_{L^{r/s}(\Sn)},
\end{split}
\]
where the supremum is taken over all functions $f,g\in H^{2mp}$ with $\|f\|_{H^{2mp}}=\|g\|_{H^{2mp}}=1$.\\
\mbox{}
\\
For the converse, we need to show
\begin{equation}\label{iEqT-10}
\|\widetilde{\mu}\|^{1-\sigma/2}_{L^{r/s}(\Sn)}\lesssim  \int_{\Bn} |g(z)|^{\sigma} \,|Rf(z)|^{\sigma} \, \widehat{\mu}_{t}(z)^{-\sigma/2}\,(1-|z|^2)^{\sigma}\,d\mu(z)
\end{equation}
for  all functions $f,g\in H^{2mp}$ with $\|f\|_{H^{2mp}}=\|g\|_{H^{2mp}}=1$. For each $f,g\in H^{2mp}$, set
\[
d\nu_ f(z)=|Rf(z)|^{\sigma} \, \widehat{\mu}_{t}(z)^{-\sigma/2}\,(1-|z|^2)^{\sigma}\,\chi_{spt (\mu)}(z)\,d\mu(z).
\]
Then, by (ii) we have
\[
\int_{\Bn} |g(z)|^{\sigma} \,d\nu_ f(z) \le C_ f \,\|g\|_{H^{2mp}}^{\sigma},
\]
with $C_ f \asymp \|f\|^{\sigma}_{H^{2mp}}$. By Luecking's theorem, we know that this is equivalent to the condition $\widetilde{\nu_ f} \in L^{\frac{2mp}{2mp-\sigma}}(\Sn)$, with
\[
\|\widetilde{\nu_ f}\|_{L^{\frac{2mp}{2mp-\sigma}}(\Sn)} \asymp C_ f \asymp \|f\|^{\sigma}_{H^{2mp}}.
\]
That is, we have
\[
\int_{\Sn} \left ( \int_{\Gamma(\zeta)\cap spt(\mu)} \! \! \!\!\! \!\!   |Rf(z)|^{\sigma} \, \widehat{\mu}_{t}(z)^{-\sigma/2}\,(1-|z|^2)^{\sigma}\,\frac{d\mu(z)}{(1-|z|^2)^n} \right )^{\frac{2mp}{2mp-\sigma}} \! \! \! \!\!\!\!\!  d\sigma(\zeta) \lesssim \,\big \|f \big \|_{H^{2mp}}^{\frac{2mp\sigma}{2mp-\sigma}}.
\]
 Now we proceed as in the proof of Proposition \ref{pro-1}. We only sketch the proof without giving all the details. We test this inequality with the function $F_ s$ given by \eqref{EqT-01},
where $\lambda =\{\lambda_ k\} \in T^{2mp}_ 2 (Z)$. We know that $F_ s\in H^{2mp}$ with $\|F_ s\|_{H^{2mp}}\lesssim \|\lambda \|_{T^{2mp}_ 2}$. Using the argument with Kahane and Khintchine's inequalities, we obtain
\begin{equation}\label{EqT-02}
\int_{\Sn} \left ( \sum_{a_ k\in\widetilde{\Gamma}(\zeta)} |\lambda_ k |^{\sigma} \, \widehat{\mu}_{2t}(a_ k)^{-\sigma/2}\,\widehat{\mu}_ t(a_ k) \right )^{\frac{2mp}{2mp-\sigma}} \!\!\!\!d\sigma(\zeta) \le C \,\big \|\lambda \big \|_{T^{2mp}_ 2}^{\frac{2mp}{2mp-\sigma}}.
\end{equation}
In order to prove that $\widetilde{\mu}\in L^{r/s}(\Sn)$, it suffices to show that $\{\widehat{\mu}_ t(a_ k)\}\in T_ 1^{r/s}(Z)$, and this is equivalent to $\{\widehat{\mu}_ t(a_ k)^{\frac{2-\sigma}{2}}\}\in T_ {\frac{2}{2-\sigma}}^{\frac{2r}{s(2-\sigma)}}(Z)$. For proving that, we know that it is enough to show that
$
 \big \{\widehat{\mu}_{2t}(a_ k)^{-\sigma/4}\,\widehat{\mu}_ t(a_ k)^{1/2} \big \}_k \in T_ {\frac{4}{2-\sigma}}^{\frac{4r}{s(2-\sigma)}}(Z).
$
By duality, we must prove that
\[
\sum_ k |\alpha_ k| \,\widehat{\mu}_{2t}(a_ k)^{-\sigma/4}\,\widehat{\mu}_ t(a_ k)^{1/2}  \,(1-|a_ k|^2)^n \lesssim \|\alpha \|_{T^{\frac{4r}{4r-2s+\sigma s}}_{\frac{4}{2+\sigma}}}
\]
for each $\alpha=\{\alpha_k\}\in T^{\frac{4r}{4r-2s+\sigma s}}_{\frac{4}{2+\sigma}}(Z)$.
We have
\[
\begin{split}
\sum_ k |\alpha_ k| \,& \widehat{\mu}_{2t}(a_ k)^{-\sigma/4}\,\widehat{\mu}_ t(a_ k)^{1/2} \,(1-|a_ k|^2)^n
\\
&\asymp
\int_{\Sn} \left (\sum_ {a_ k\in \Gamma (\zeta)} |\alpha_ k| \,\widehat{\mu}_{2t}(a_ k)^{-\sigma/4}\,\widehat{\mu}_ t(a_ k)^{1/2}\right ) d\sigma(\zeta).
\end{split}
\]
Since, from \eqref{EqT-04}, 
 $$
T^{\frac{4r}{4r-2s+\sigma s}}_{\frac{4}{2+\sigma}}(Z)=T_{2}^{\frac{4mp}{2mp+\sigma}}(Z)\cdot T_{4/\sigma}^{\frac{4mp}{\sigma}}(Z)
$$
 using the factorization of tent sequence spaces we can write
$
\alpha_ k =\beta_ k \cdot \lambda_ k ^{\sigma/2}$, with  $\lambda \in T_ 2^{2mp}(Z)$ and $\beta \in T_ 2^{\frac{4mp}{2mp+\sigma}}(Z)$
with $\|\beta \|_{T_ 2^{\frac{4mp}{2mp+\sigma}}}\cdot \|\lambda \|^{\sigma/2}_{T_ 2^{2mp}}\lesssim \|\alpha\|_{T^{\frac{4r}{4r-2s+\sigma s}}_{\frac{4}{2+\sigma}}}.$ Then
\[
\begin{split}
\sum_ {a_ k\in \Gamma (\zeta)} &|\alpha_ k| \,\widehat{\mu}_{2t}(a_ k)^{-\sigma/4}\,\widehat{\mu}_ t(a_ k)^{1/2}
\\
&\le \left (\sum_ {a_ k\in \Gamma (\zeta)}  |\beta_ k|^2 \right )^{1/2}\left (\sum_ {a_ k\in \Gamma (\zeta)}  |\lambda_ k|  \widehat{\mu}_{2t}(a_ k)^{-\sigma/2}\,\widehat{\mu}_ t(a_ k) \right )^{1/2}.
\end{split}
\]
Finally, an application of H\"{o}lder's inequality with exponent $4mp/(2mp+\sigma)$ that has conjugate exponent $4mp/(2mp-\sigma)$, together with our condition \eqref{EqT-02}  shows that $\widetilde{\mu}$ is in $L^{r/s}(\Sn)$. An examination of the estimates as done in the proof of Proposition \ref{pro-1} gives \eqref{iEqT-10}.
\end{proof}
\mbox{}
\\
Now we are ready for the proof of the necessity for the case $r<s$ in Theorem \ref{MT2}.

\begin{theorem}\label{MT2-b}
Let $\mu$ be a positive Borel measure on $\Bn$. Let $0<q<p<\infty$,  $s>0$  with $r/s<1$ where $r=pq/(p-q)$. If $A_{\mu, s}:H^p\rightarrow L^q(\Sn)$ is bounded, then $\widetilde{\mu} \in L^{r/s}(\Sn)$. Moreover, we have $\big \|\widetilde{\mu} \big \|_{L^{r/s}(\Sn)} \lesssim \big \| A_{\mu,s}  \big \|^s_{H^p\rightarrow L^q(\Sn)}$.
\end{theorem}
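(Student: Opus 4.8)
The plan is to deduce this from Lemma \ref{L2} and Proposition \ref{L1-A}, in the spirit of the proof of Theorem \ref{mt2-c}. Fix once and for all a positive integer $m$ large enough that the conditions in \eqref{EqT-10} hold and, in addition, $mpq>q+2p$; let $\sigma$ be given by \eqref{EqT-04}, so that $0<\sigma<2$, and set $a=2\sigma/(2-\sigma)>0$. By Lemma \ref{L2}, $A_{\mu,ms}:H^{mp}\to L^{mq}(\Sn)$ is bounded with norm $\le\|A_{\mu,s}\|_{H^{p}\to L^{q}}^{1/m}$. By the ``moreover'' part of Proposition \ref{L1-A} one has $\|\widetilde{\mu}\|_{L^{r/s}(\Sn)}^{1-\sigma/2}\asymp\sup I$, where
\[
I:=\int_{\Bn\cap spt(\mu)}|g(z)|^{\sigma}\,|Rf(z)|^{\sigma}\,\widehat{\mu}_{t}(z)^{-\sigma/2}\,(1-|z|^2)^{\sigma}\,d\mu(z),
\]
the supremum being over $f,g\in H^{2mp}$ with $\|f\|_{H^{2mp}}=\|g\|_{H^{2mp}}=1$; thus it suffices to bound $I$ appropriately for such $f,g$.

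To do so I would first pass to tents via \eqref{EqG} and split the integrand inside $\Gamma(\zeta)$ by H\"older's inequality with exponents $2/(2-\sigma)$ and $2/\sigma$, isolating $|g|^{\sigma}$ from $|Rf|^{\sigma}\widehat{\mu}_{t}^{-\sigma/2}(1-|z|^2)^{\sigma}$; by the subharmonicity estimate \eqref{Eq-RS11} the $|Rf|$-part is then dominated by $\big(\int_{\widetilde{\Gamma}(\zeta)}|Rf|^2\,dv_{1-n}\big)^{\sigma/2}$, so that
\[
I\lesssim\int_{\Sn}\Big(\int_{\Gamma(\zeta)}|g(z)|^{a}\,\frac{d\mu(z)}{(1-|z|^2)^n}\Big)^{\frac{2-\sigma}{2}}\Big(\int_{\widetilde{\Gamma}(\zeta)}|Rf|^2\,dv_{1-n}\Big)^{\sigma/2}d\sigma(\zeta).
\]
The crucial observation is that $a<ms$ --- a consequence of $mq>2$ together with $r<s$ --- so one more application of H\"older's inequality with exponent $ms/a$ gives the pointwise bound $\int_{\Gamma(\zeta)}|g|^{a}\,d\mu/(1-|z|^2)^n\le(A_{\mu,ms}g(\zeta))^{a}\,\widetilde{\mu}(\zeta)^{\,1-a/(ms)}$, and since $a(2-\sigma)/2=\sigma$ one obtains
\[
I\lesssim\int_{\Sn}\big(A_{\mu,ms}g(\zeta)\big)^{\sigma}\,\widetilde{\mu}(\zeta)^{\theta}\,\Big(\int_{\widetilde{\Gamma}(\zeta)}|Rf|^2\,dv_{1-n}\Big)^{\sigma/2}d\sigma(\zeta),\qquad \theta:=\tfrac{(ms-a)(2-\sigma)}{2ms}.
\]

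Next I would apply H\"older's inequality in the variable $\zeta$ with three exponents $e_1=2mp/\sigma$, $e_2=mq/\sigma$, and $e_3$ determined by $e_1^{-1}+e_2^{-1}+e_3^{-1}=1$; our assumption $mpq>q+2p$ makes this admissible, and the precise value \eqref{EqT-04} of $\sigma$ is exactly what forces $\theta e_3=r/s$. The first of the three resulting factors is $\lesssim\|f\|_{H^{2mp}}^{\sigma}$ by the area function description of $H^{2mp}$, exactly as in the proof of Theorem \ref{mt2-c}; the second equals $\|A_{\mu,ms}g\|_{L^{mq}(\Sn)}^{\sigma}$, hence is $\le\|A_{\mu,s}\|_{H^{p}\to L^{q}}^{\sigma/m}\|g\|_{H^{2mp}}^{\sigma}$ by Lemma \ref{L2} and $\|g\|_{H^{mp}}\le\|g\|_{H^{2mp}}$; and the third equals $\|\widetilde{\mu}\|_{L^{r/s}(\Sn)}^{\theta}$. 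Taking the supremum over unit-norm $f,g$ and recalling $\|\widetilde{\mu}\|_{L^{r/s}(\Sn)}^{1-\sigma/2}\asymp\sup I$, this yields
\[
\|\widetilde{\mu}\|_{L^{r/s}(\Sn)}^{1-\sigma/2}\lesssim\|A_{\mu,s}\|_{H^{p}\to L^{q}(\Sn)}^{\sigma/m}\,\|\widetilde{\mu}\|_{L^{r/s}(\Sn)}^{\theta}.
\]
Since $\theta<1-\sigma/2$ and $(1-\sigma/2)-\theta=\sigma/(ms)$, absorbing the last factor gives $\|\widetilde{\mu}\|_{L^{r/s}(\Sn)}\lesssim\|A_{\mu,s}\|_{H^{p}\to L^{q}(\Sn)}^{s}$; as usual, the absorption is first carried out for the truncations $\mu_R=\mu|_{\{|z|<R\}}$, for which $\widetilde{\mu_R}$ is bounded and $\|A_{\mu_R,s}\|\le\|A_{\mu,s}\|$, and one then lets $R\to1$ by monotone convergence.

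The main obstacle is the exponent bookkeeping. Calling on \eqref{Eq-RS11} and the area theorem rigidly forces the $|Rf|$-part into the shape $\big(\int_{\widetilde{\Gamma}(\zeta)}|Rf|^2\,dv_{1-n}\big)^{\sigma/2}$, and hence the exponent $a=2\sigma/(2-\sigma)$ on $|g|$; one must then verify both that $a$ lands strictly below $ms$ --- so that the passage to the operator $A_{\mu,ms}$ coming from Lemma \ref{L2} costs only a power of $\widetilde{\mu}$ that is afterwards absorbed --- and that $e_1,e_2,e_3,\theta$ satisfy $e_1^{-1}+e_2^{-1}+e_3^{-1}=1$ and $\theta e_3=r/s$ with all exponents in the admissible ranges. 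It is precisely to make these identities hold that $\sigma$ is chosen as in \eqref{EqT-04} and $m$ is taken large.
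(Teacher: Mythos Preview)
Your overall plan is the right one and, in fact, coincides with the paper's third case $\sigma<(2-\sigma)ms$; your observation that $a<ms$ always holds once $mq>2$ even shows that this is the only case that actually occurs under \eqref{EqT-10}. However, the identity $\theta e_3=r/s$ that you assert is \emph{false} with your choices. Take $p=2$, $q=1$, $s=4$, $m=5$: then $\sigma=10/9$, $a=5/2$, $\theta=7/18$, $e_1=18$, $e_2=9/2$, $e_3=18/13$, so $\theta e_3=7/13\neq 1/2=r/s$. The source of the mismatch is the factor $2$: the area description of $H^{2mp}$ forces $e_1=2mp/\sigma$, but you pair this with $e_2=mq/\sigma$ and with the $A_{\mu,ms}$-Hölder step. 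A short computation shows that, with your $\theta$ and $e_3$, one has $(s/r)\theta-1/e_3=\sigma/(2mp)\neq0$.

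The fix is to run Lemma \ref{L2} with $2m$ rather than $m$, exactly as the paper does: use $A_{\mu,2ms}:H^{2mp}\to L^{2mq}$, apply the inner H\"older with exponent $2ms/a$ (still $>1$, since you already proved $a<ms$), and take $e_2=2mq/\sigma$. Then $\theta$ becomes $\theta'=(2-\sigma)/2-\sigma/(2ms)$, and using the defining relation $(s/r)(2-\sigma)/2=1-\sigma/(mp)$ of $\sigma$ together with $1/r=1/q-1/p$ one checks $(s/r)\theta'=1-\sigma/(2mp)-\sigma/(2mq)=1/e_3$, i.e.\ $\theta' e_3=r/s$. The remaining absorption step works as you wrote, with $(1-\sigma/2)-\theta'=\sigma/(2ms)$ and $\|A_{\mu,2ms}\|\le\|A_{\mu,s}\|^{1/(2m)}$; your extra hypothesis $mpq>q+2p$ can then be relaxed to $mpq>p+q$, which is what guarantees $1/e_3>0$.
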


\begin{proof}
 Take a positive integer $m$ big enough so that \eqref{EqT-10} holds. From Lemma \ref{L2}, we know that  $A_{\mu,2ms}:H^{2mp}\rightarrow L^{2qm}(\Sn)$ is bounded with $\|A_{\mu,2ms}\|_{H^{2mp}\rightarrow L^{2mq}(\Sn)}\le \|A_{\mu,s}\|_{H^p \rightarrow L^q(\Sn)}^{1/2m}$.
Set $\sigma$ as in \eqref{EqT-04}.  We have
\[
\frac{mp(2-\sigma)}{2(mp-\sigma)}=r/s=\frac{pq}{s(p-q)}.
\]
Observe that, as $r/s<1$, $mp>2$ and  \eqref{EqT-10}, it follows that $0<\sigma<2$.
We also notice that
\begin{equation}\label{Eq1-q1}
q=\frac{(2-\sigma)mps}{2mp+ms(2-\sigma)-2\sigma}.
\end{equation}
Now, we proceed to establish the inequality
\begin{equation}\label{MIn}
\int_{\Bn \cap spt (\mu)} \!\!\!\!\! |g(z)|^{\sigma} \,|Rf(z)|^{\sigma} \, \widehat{\mu}_{t}(z)^{-\sigma/2}(1-|z|^2)^{\sigma}\,d\mu(z) \lesssim \|g\|_{H^{2mp}}^{\sigma} \cdot \|f\|^{\sigma}_{H^{2mp}}.
\end{equation}
Using \eqref{EqG}, H\"{o}lder's inequality with exponent $2/\sigma>1$, and the estimate \eqref{Eq-RS11}, we get
\[
\begin{split}
\int_{\Bn \cap spt(\mu)}\!\!\!\!\!& |g(z)|^{\sigma} \,|Rf(z)|^{\sigma} \,  \widehat{\mu}_{t}(z)^{-\sigma/2}\,(1-|z|^2)^{\sigma}\,d\mu(z)
\\
& \lesssim \int_{\Sn} \! \left (\int_{\widetilde{\Gamma}(\zeta)} \!\!\! |Rf(z)|^2\,dv_{1-n}(z) \right )^{\sigma/2} \!\!\left (\int_{\Gamma(\zeta)} \!\!\!|g(z)|^{\frac{2\sigma}{2-\sigma}} \,\frac{d\mu(z)}{(1-|z|^2)^n} \right )^{1-\sigma/2}\!\!\!\!\!\!\!\!\!d\sigma (\zeta).
\end{split}
\]
Now, by H\"{o}lder's inequality with exponent $2mp/\sigma$ and the area function description of Hardy spaces, we get
\begin{equation}\label{Eqrs25}
\begin{split}
\int_{\Bn \cap spt (\mu)} |g(z)|^{\sigma} &\,|Rf(z)|^{\sigma} \,\widehat{\mu}_ t(z)^{-\sigma/2} (1-|z|^2)^{\sigma}\,d\mu(z)
\\
& \lesssim \|f\|_{H^{2mp}}^{\sigma} \left(\int_{\Sn} A_{\mu,\frac{2\sigma}{2-\sigma}} g(\zeta) ^{2mp\sigma/(2mp-\sigma)}\,d\sigma(\zeta)\right )^{\frac{(2mp-\sigma)}{2mp}}.
\end{split}
\end{equation}
If $\sigma=(2-\sigma)ms$, by \eqref{Eq1-q1} we have $2mp\sigma/(2mp-\sigma)=2mq$, and we get
\[
\begin{split}
\int_{\Bn \cap spt (\mu)} |g(z)|^{\sigma}\,& |Rf(z)|^{\sigma} \,\widehat{\mu}_ t(z)^{-\sigma/2} (1-|z|^2)^{\sigma}\,d\mu(z)
\\
& \lesssim \|f\|_{H^{2mp}}^{\sigma} \cdot \big \|A_{\mu,2ms} g \big \|^{\sigma}_{L^{2mq}(\Sn)}.
\end{split}
\]
Thus, as $A_{\mu,2ms}:H^{2mp}\rightarrow L^{2mq}(\Sn)$ is bounded with $\|A_{\mu,2ms}\|_{H^{2mp}\rightarrow L^{2mq}(\Sn)}\le \|A_{\mu,s}\|_{H^p \rightarrow L^q(\Sn)}^{1/2m}$, by Proposition \ref{L1-A} we get $\widetilde{\mu}\in L^{r/s}(\Sn)$ with $\|\widetilde{\mu}\|_{L^{r/s}(\Sn)}\lesssim \|A_{\mu,s}\|_{H^p \rightarrow L^q(\Sn)}^s$, and we are done. \\
\mbox{}
\\
 If $\sigma>(2-\sigma)ms$, then
\[
A_{\mu,\frac{2\sigma}{2-\sigma}} g(\zeta) \le g^{*}(\zeta)^{\frac{\sigma-(2-\sigma)ms}{\sigma}} \cdot A_{\mu,2m} g(\zeta)^{\frac{m(2-\sigma)}{\sigma}}.
\]
Since $mp>2$ and $0<\sigma<2$, it is easy to check that
\[
\frac{2mp-\sigma}{\sigma-(2-\sigma)ms}>1,
\]
that has conjugate exponent
\[
\left (\frac{2mp-\sigma}{\sigma-(2-\sigma)ms}\right )'=\frac{2mp-\sigma}{2mp+(2-\sigma)ms-2\sigma}=\frac{(2mp-\sigma)q}{(2-\sigma)mps}.
\]
Thus, we can use H\"{o}lder's inequality with that exponents in order to obtain
\[
\begin{split}
\int_{\Sn} A_{\mu,\frac{2\sigma}{2-\sigma}}& g(\zeta) ^{2mp\sigma/(2mp-\sigma)}\,d\sigma(\zeta) 
\\
&\le \int_{\Sn} g^*(\zeta)^{2mp \cdot \frac{[\sigma-ms(2-\sigma)]}{2mp-\sigma}}A_{\mu,2ms} g(\zeta) ^{\frac{2m^2ps(2-\sigma)}{2mp-\sigma}}\,d\sigma(\zeta)
\\
& \le \left ( \int_{\Sn} g^*(\zeta)^{2mp}\,d\sigma(\zeta)\right )^{\frac{\sigma-ms(2-\sigma)}{2mp-\sigma}} \left ( \int_{\Sn} A_{\mu,2ms} g(\zeta) ^{2mq}\,d\sigma(\zeta) \right )^{\frac{(2-\sigma)mps}{(2mp-\sigma)q}}.
\end{split}
\]
Putting this in the estimate \eqref{Eqrs25}, by the $L^p$-boundedness of the admissible maximal function, we have
\[
\begin{split}
\int_{\Bn \cap spt (\mu)} |g(z)|^{\sigma}\,|Rf(z)|^{\sigma} &\,\widehat{\mu}_ t(z)^{-\sigma/2} (1-|z|^2)^{\sigma}\,d\mu(z)
\\
& \lesssim \|f\|_{H^{2mp}}^{\sigma}\cdot \|g \|_{H^{2mp}}^{\sigma-ms(2-\sigma)}\cdot \big \| A_{\mu,2ms} g\big \|^{(2-\sigma)ms} _{L^{2mq}(\Sn)},
\end{split}
\]
and again the result follows by the boundedness of $A_{\mu,2ms}:H^{2mp}\rightarrow L^{2mq}(\Sn)$ and Proposition \ref{L1-A}.\\
\mbox{}
\\
If $\sigma<(2-\sigma)ms$, then
by H\"{o}lder's inequality with exponent $ms(2-\sigma)/\sigma$, we have
\[
 A_{\mu,\frac{2\sigma}{2-\sigma}} g (\zeta) \le A_{\mu,2ms} g(\zeta) \,\,\widetilde{\mu}(\zeta)^{\frac{ms(2-\sigma)-\sigma}{2\sigma ms }}.
\]
Also, as $\sigma<2$ and $mq>2$, we can see that
\[
\frac{q(2mp-\sigma)}{p\sigma}>1,
\]
and its conjugate exponent is
\[
\left (\frac{q(2mp-\sigma)}{p\sigma}\right )'=\frac{q(2mp-\sigma)}{q(2mp-\sigma)-p\sigma}.
\]
It is possible to check that
\[
\frac{pq[ms(2-\sigma)-\sigma]}{q(2mp-\sigma)-p\sigma}=r.
\]
Hence, applying H\"{o}lder's inequality with the previous exponents we have
\[
\begin{split}
\int_{\Sn} A_{\mu,\frac{2\sigma}{2-\sigma}} g(\zeta) ^{\frac{2mp\sigma}{2mp-\sigma}}&\,d\sigma(\zeta) \le \int_{\Sn}A_{\mu,2ms} g(\zeta) ^{\frac{2mp\sigma}{2mp-\sigma}} \,\,\widetilde{\mu}(\zeta)^{\frac{p[ms(2-\sigma)-\sigma]}{s(2mp- \sigma) }}\,d\sigma(\zeta)
\\
&
\le \Big \| A_{\mu,2ms} g \Big \|_{L^{2mq}(\Sn)}^{\frac{2mp\sigma}{2mp-\sigma}} \cdot \big \|\widetilde{\mu} \big \|_{L^{r/s}(\Sn)}^{\frac{p[ms(2-\sigma)-\sigma]}{s(2mp- \sigma) }}.
\end{split}
\]
Using Proposition \ref{L1-A} we get
\[
\begin{split}
\|\widetilde{\mu}\|^{1-\sigma/2}_{L^{r/s}(\Sn)}&\asymp \sup \int_{\Bn} |g(z)|^{\sigma} \,|Rf(z)|^{\sigma} \, \widehat{\mu}_{t}(z)^{-\sigma/2}\,(1-|z|^2)^{\sigma}\,d\mu(z)
\\
& \lesssim  \Big \| A_{\mu,2ms}  \Big \|_{H^{2mp} \rightarrow L^{2mq}(\Sn)}^{\sigma} \cdot \big \|\widetilde{\mu} \big \|_{L^{r/s}(\Sn)}^{\frac{ms(2-\sigma)-\sigma}{2ms }}
\\
& \lesssim \Big \| A_{\mu,s}  \Big \|_{H^p \rightarrow L^q(\Sn)}^{\sigma/2m} \cdot \big \|\widetilde{\mu} \big \|_{L^{r/s}(\Sn)}^{\frac{ms(2-\sigma)-\sigma}{2ms }},
\end{split}
\]
and, assuming that $\widetilde{\mu}$ is already in $L^{r/s}(\Sn)$, this shows that
\[
\big \|\widetilde{\mu} \big \|_{L^{r/s}(\Sn)} \lesssim \big \| A_{\mu,s}  \big \|_{H^p \rightarrow L^q(\Sn)}^s.
\]
Finally, the case for general $\mu$ follows from this estimate and an standard approximation argument, finishing the proof of the theorem.
\end{proof}

{\bf Acknowledgments:} \  Most of this work was done while the first author  visited Universitat de Barcelona in 2019.


\end{document}